\documentclass[11pt]{amsart}
\usepackage{amssymb}
\usepackage{amsmath}
\usepackage{latexsym}
\usepackage{microtype}
\usepackage{tikz}
\usepackage{hyperref}
\hypersetup{colorlinks=true,citecolor=blue,linkcolor=blue}

\usepackage[margin=1in,marginparwidth=0.8in, marginparsep=0.1in]{geometry}

\newtheorem{theorem}{Theorem}[section]
\newtheorem{claim*}[theorem]{Claim}

\newtheorem{definition}[theorem]{Definition}
\newtheorem{lemma}[theorem]{Lemma}
\newtheorem{proposition}[theorem]{Proposition}
\newtheorem{scholium}[theorem]{Scholium}
\theoremstyle{remark}
\newtheorem{example}[theorem]{Example}
\newtheorem{remark}[theorem]{Remark}

\numberwithin{equation}{section}

\usepackage{cleveref}
\crefname{theorem}{Theorem}{Theorems}
\crefname{proposition}{Proposition}{Propositions}
\crefname{section}{Section}{Sections}
\crefname{subsection}{Section}{Sections}
\crefname{definition}{Definition}{Definitions}
\crefname{remark}{Remark}{Remarks}
\crefname{lemma}{Lemma}{Lemmas}
\crefname{corollary}{Corollary}{Corollaries}
\crefname{example}{Example}{Examples}
\crefname{figure}{Figure}{Figures}


\newcommand{\QQ}{\mathbb{Q}}
\newcommand{\RR}{\mathbb{R}}
\newcommand{\ZZ}{\mathbb{Z}}

\newcommand{\cA}{\mathcal{A}}

\newcommand{\fd}{\mathfrak{d}}
\newcommand{\fD}{\mathfrak{D}}
\newcommand{\fm}{\mathfrak{m}}
\newcommand{\fp}{\mathfrak{p}}

\newcommand{\bfd}{{\bf d}}
\newcommand{\bfg}{{\bf g}}

\newcommand{\Hom}{\operatorname{Hom}}
\newcommand{\Mono}{\operatorname{Mono}}
\newcommand{\Spec}{\operatorname{Spec}}

\title[The Greedy Basis equals the Theta Basis]{The Greedy Basis \\ Equals The Theta Basis: \\ A Rank Two Haiku}
\author[Cheung]{Man Wai Cheung}
\address[Man Wai Cheung]{University of California, San Diego}
\email{mwc31@cam.ac.uk}

\author[Gross]{Mark Gross}
\address[Mark Gross]{University of Cambridge}
\email{mg475@dpmms.cam.ac.uk}

\author[Muller]{Greg Muller}
\address[Greg Muller]{University of Michigan}
\email{morilac@umich.edu}

\author[Musiker]{Gregg Musiker}
\address[Gregg Musiker]{University of Minnesota}
\email{musiker@math.umn.edu}

\author[Rupel]{Dylan Rupel}
\address[Dylan Rupel]{University of Notre Dame}
\email{drupel@nd.edu}

\author[Stella]{Salvatore Stella}
\address[Salvatore Stella]{Universit\`a degli studi di Roma ``La Sapienza''}
\email{stella@mat.uniroma1.it}

\author[Williams]{Harold Williams}
\address[Harold Williams]{The University of Texas at Austin}
\email{hwilliams@math.utexas.edu}

\begin{document}

\begin{abstract}
We prove the equality of two canonical bases of a rank 2 cluster algebra, the greedy basis of Lee-Li-Zelevinsky and the theta basis of Gross-Hacking-Keel-Kontsevich.

\bigskip

\begin{center}
{Dedicated to the memory of Andrei Zelevinsky.}
\end{center}

\end{abstract}

\maketitle

\section{Introduction}

Cluster algebras are commutative rings with partial bases of a special form, originally discovered in the context of dual canonical bases in Lie theory \cite{FZ}.  Their axiomatics encapsulates the fact that many kinds of canonical bases in nature have large subsets which are governed by a uniform combinatorics.  Elements of these subsets are monomials in distinguished elements called cluster variables, which are grouped into overlapping collections called clusters.  Each cluster has an associated skew-symmetrizable matrix and the entire cluster algebra can be reconstructed recursively from any particular cluster along with this matrix.

A fundamental issue in the theory is understanding natural completions of the partial basis of cluster monomials to a full basis of the cluster algebra.  Depending on the context, this question can be analyzed from a wide range of perspectives drawn from representation theory, geometry, combinatorics, and mathematical physics \cite{Dup,KQ,FG,MSW,BZ,Rup2,GMN}.  In general, one expects any cluster algebra to admit several natural bases related in potentially subtle ways.  A basic example of this is the relationship between the dual canonical and dual semicanonical bases of the coordinate ring of the positive unipotent subgroup of a simple algebraic group \cite{GLS}.  This example also illustrates that, in general, even determining whether or not two constructions of canonical bases in a cluster algebra lead to the same result is nontrivial.  The purpose of the present paper is to compare two such constructions for cluster algebras associated to $2\times2$ skew-symmetrizable matrices. 

The first basis we consider is the greedy basis of \cite{LLZ}.  Every cluster algebra is contained in the ring of Laurent polynomials in the cluster variables of any of its clusters.  The recently-confirmed positivity conjecture, proved in the rank 2 case in \cite{LS,Rup} and in the general case in \cite{LS2,GHKK}, asserts that the coefficients of the Laurent expansion of any cluster variable are positive integers.  The greedy basis is defined so that all of its elements, not just cluster variables, have positive Laurent expansions in any cluster and that the coefficients of any such Laurent expansion are as small as possible.  The resulting coefficients turn out to enumerate combinatorial objects called compatible pairs related to maximal Dyck paths.

The second basis we consider is the theta basis of \cite{GHKK}.  Unlike the
greedy basis it is defined for cluster algebras of arbitrary rank.  In fact,
this basis is a special case of a much more general construction based on two
concepts. The first is that of scattering diagram introduced in \cite{KS} in two
dimensions and in \cite{GS} in all dimensions.   This diagram
encodes the relations among cluster transformations and also among elements of the
tropical vertex group.  The second is a combinatorial
notion of broken line, introduced in \cite{G10} with their theory further
developed in \cite{CPS} and then \cite{GHK11}.  The coefficients of Laurent
expansions of theta basis elements enumerate broken lines.  These are piecewise-linear paths in a tropicalization of the cluster variety
whose points of non-linearity lie along the scattering diagram.   Morally
broken lines capture the geometry of holomorphic disks in the mirror cluster variety.

Our main result is the following.
\begin{theorem}
Let $\mathcal{A}$ be a rank 2 cluster algebra.  The greedy and theta bases of $\mathcal{A}$ coincide.
\end{theorem}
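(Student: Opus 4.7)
The plan is to show that each theta basis element $\vartheta_g$ satisfies the characterization of the corresponding greedy basis element $x[g]$ from \cite{LLZ}. Both bases are naturally parametrized by $g$-vectors in $\ZZ^2$ and both contain all cluster monomials, so the substantive content lies in comparing the ``imaginary'' (non-cluster-monomial) elements, which exist only when $bc \geq 4$ and are indexed by $g$-vectors inside the imaginary cone.

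The key tool is the Lee--Li--Zelevinsky uniqueness statement: the greedy element $x[a_1,a_2]$ is the unique element of $\cA$ whose Laurent expansion in the initial seed has the form $x_1^{-a_1}x_2^{-a_2}\sum_{p,q \geq 0} e(p,q) x_1^{bp} x_2^{cq}$ with $e(0,0)=1$ and coefficients $e(p,q)$ satisfying the explicit recurrence that governs the count of compatible pairs on maximal Dyck paths. I would proceed by first matching the $g$-vector parametrizations on the two sides (automatic on cluster monomials) and then showing that the broken-line expansion of $\vartheta_g$ yields a Laurent polynomial of exactly this shape, with its coefficients satisfying the same recurrence. By uniqueness, this forces $\vartheta_g = x[g]$.

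The main technical obstacle is the enumeration of broken lines in the rank 2 scattering diagram, particularly in the wild case $bc \geq 5$ where the diagram carries a dense set of walls accumulating in the ``Badlands'' cone. For $g$-vectors lying in a cluster chamber the broken-line computation is essentially immediate, as only finitely many walls can be crossed in an admissible way. For imaginary $g$-vectors one must organize the contributions from broken lines that penetrate the Badlands; here I expect the Lee--Li--Zelevinsky recurrence to emerge from the consistency of the scattering diagram together with a local analysis of how a broken line can bend at each wall to produce a monomial of the prescribed form $x_1^{bp}x_2^{cq}$. The two branches of the recurrence should reflect the asymmetry between the two ``boundary'' walls of the Badlands cone, with the binomial coefficients arising from the exponentials of the relevant scattering functions along those walls.
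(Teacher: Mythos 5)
There is a genuine gap: your plan hinges on showing that the broken-line coefficients of $\vartheta_g$ satisfy the Lee--Li--Zelevinsky recursion \eqref{eq:greedy recursion}, and you defer exactly the step where this would have to be proved. You write that you ``expect the recurrence to emerge from the consistency of the scattering diagram together with a local analysis of how a broken line can bend at each wall,'' but this is precisely the intractable part: for $bc>4$ the wall structure inside the irrational (``Badlands'') cone of $\fD_{(b,c)}$ is not completely understood, so any argument that requires organizing the contributions of broken lines penetrating that cone wall-by-wall has no known starting point. No one has carried out such an enumeration, and the recursion \eqref{eq:greedy recursion} involves a $\max$ of two alternating sums with binomial coefficients --- it is not of a shape that falls out of consistency in any evident way. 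As written, your argument establishes the equality only in the finite and affine cases ($bc\le 4$), where the chambers cover everything.

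The paper's route is designed to avoid exactly this. The crucial input is a rigidity statement (Theorem~\ref{th:Newton polygons} and Scholium~\ref{sch}): an element of $\cA(b,c)$ containing $x_1^{-a_1}x_2^{-a_2}$ with coefficient $1$ and with support contained in the region $R_{a_1,a_2}$ must already equal $x[a_1,a_2]$. With this in hand one never needs to compute any coefficient of $\vartheta_{q,m}$ other than the pointed one (which is $1$ because only the straight broken line contributes); it suffices to bound the \emph{support} of the theta function. That bound is obtained from a conserved quantity along broken lines (the ``angular momentum'' $q_2m_1-q_1m_2$) together with the monotonicity of exponents at bends, and these arguments are insensitive to the unknown fine structure of the walls in the irrational cone. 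You would also need to be more careful with parametrizations: the greedy basis is indexed by $\bfd$-vectors while theta functions are indexed by $\bfg$-vectors, and matching them requires the piecewise-linear change of variables $T$ (equivalently, working with the modified diagram $\fD^{\bfd}_{(b,c)}$), not merely the observation that both bases contain the cluster monomials.
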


The proof is based on an analysis of exactly which monomials may appear in elements of the theta basis.  It can be shown that elements of the greedy basis are essentially determined by which coefficients of their Laurent expansion are nonzero.  That is, if an element of $\mathcal{A}$ has the same support as a greedy basis element in any particular Laurent expansion, it must in fact coincide with that element up to a scalar.  Thus to show that elements of the theta basis are elements of the greedy basis, it suffices to establish certain bounds on the behavior of broken lines rather than explicitly enumerating them.

The organization of the paper is as follows.  In sections 2 and 3, we review the basic definitions and properties of the greedy and theta bases, respectively.  The natural parametrizing sets of the two bases, the $\bfd$-vectors and $\bfg$-vectors, are distinct and we explain in section 4 how to relate them.  This determines a bijection between the two bases and we show in section 5 that the basis elements mapped to each other by this bijection actually coincide, proving the main theorem.

\textsc{Acknowledgements} This paper is the result of a working group on scattering diagrams at the 2014 AMS Mathematics Research Community on Cluster Algebras in Snowbird, UT, which accounts for the large number of authors.  We thank the AMS and in particular Ellen Maycock and Donna Salter for helping facilitate such an enjoyable and productive environment.  We thank Maria Angelica Cueto, who was also member of our working group but declined to be named as a coauthor.  We also thank the other organizers Gordana Todorov, Michael Gekhtman, and David Speyer for making the workshop possible. Man Wai Cheung would like to thank University of Cambridge for hosting her during the completion of this paper.  A portion of this work was completed while Dylan Rupel was a research instructor at Northeastern University and Salvatore Stella was a post doctoral research scholar at North Carolina State University.

\section{Rank 2 cluster algebras and their greedy bases}
Fix positive integers $b$ and $c$.  Consider rational functions $x_k\in\QQ(x_1,x_2)$ indexed by $k\in\ZZ$ and defined recursively by 
\begin{equation}\label{eq:exchangerelations}
  x_{k-1}x_{k+1}=
  \begin{cases}
    x_k^b+1 & \text{if $k$ is odd;}\\
    x_k^c+1 & \text{if $k$ is even.}
  \end{cases}
\end{equation}
These functions are called \emph{cluster variables} and the \emph{cluster
algebra} $\cA(b,c)$ is the $\ZZ$-subalgebra of $\QQ(x_1,x_2)$ which they
generate.  Each pair $\{x_k, x_{k+1}\}$ is called a \emph{cluster} and a monomial in the variables of a cluster is called a \emph{cluster monomial}.  Later, we will fix a rank 2 lattice $M$ together with an algebra isomorphism $\ZZ[M] \cong \ZZ[x_1^{\pm1},x_2^{\pm1}]$, $x^m\mapsto x_1^{m_1}x_2^{m_2}$ for $m \in M$.  This induces a lattice isomorphism $M \cong \ZZ^2$, $m \mapsto (m_1,m_2)$.

An essential feature of the relations \eqref{eq:exchangerelations} is that they imply $\cA(b,c)$ is actually a subalgebra of $\ZZ[x_1^{\pm1},x_2^{\pm1}]$, rather than merely a subalgebra of $\QQ(x_1,x_2)$.
\begin{theorem}\label{th:Laurent}
\cite[Theorem~3.1]{FZ} Given any cluster variable $x_j$, we have $x_j\in\ZZ[x_k^{\pm1},x_{k+1}^{\pm1}]$ for every $k \in \ZZ$.
\end{theorem}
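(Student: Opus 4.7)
The plan is to use the evident shift-symmetry of the recursion \eqref{eq:exchangerelations} to reduce to a single ambient cluster, and then prove that every $x_j$ lies in that cluster's Laurent polynomial ring by induction on $|j|$. The recursion is invariant under shifting all indices by one while swapping the roles of $b$ and $c$, so it suffices to prove that $x_j \in L := \ZZ[x_1^{\pm 1}, x_2^{\pm 1}]$ for every $j \in \ZZ$; the general statement then follows by applying the shift.

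The base cases are immediate: $x_1, x_2 \in L$ by definition, and the relations at $k = 1, 2, 3$ directly give $x_0, x_3, x_4 \in L$, since in each case the denominator is a monomial in $x_1, x_2$ and hence a unit of $L$. For the inductive step at $j \geq 5$, assume $x_{j-4}, x_{j-3}, x_{j-2}, x_{j-1}$ all lie in $L$. The exchange relation $x_{j-2} x_j = x_{j-1}^e + 1$, with $e \in \{b, c\}$ determined by the parity of $j-1$, expresses $x_j$ as an element of $\QQ(x_1, x_2)$ whose numerator $x_{j-1}^e + 1$ already lies in $L$. Since $L$ is a unique factorization domain, it suffices to prove the divisibility $x_{j-2} \mid x_{j-1}^e + 1$ in $L$, or equivalently that $x_{j-1}^e \equiv -1 \pmod{x_{j-2}}$.

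The main obstacle is this divisibility, and I would establish it by combining the two preceding exchange relations. Reducing $x_{j-3} x_{j-1} = x_{j-2}^{?} + 1$ modulo $x_{j-2}$ gives $x_{j-3} x_{j-1} \equiv 1$, so $x_{j-3}$ is automatically a unit in $L/(x_{j-2})$ with inverse $x_{j-1}$. Reducing $x_{j-4} x_{j-2} = x_{j-3}^{e} + 1$ modulo $x_{j-2}$ yields $x_{j-3}^{e} \equiv -1$; crucially, the exponent on $x_{j-3}$ in this relation is the same $e$ appearing above, because the indices $j-3$ and $j-1$ share parity. Combining the two congruences,
\[
x_{j-1}^e \equiv (x_{j-3}^{-1})^e = (x_{j-3}^e)^{-1} \equiv (-1)^{-1} = -1 \pmod{x_{j-2}},
\]
as required, so $x_j = (x_{j-1}^e+1)/x_{j-2} \in L$. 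The downward induction for $j \leq -1$ is symmetric, using the exchange relations to express $x_j$ in terms of $x_{j+1}$ and $x_{j+2}$, and the parity bookkeeping is identical. The subtle point throughout is the parity matching in the third step: without it the two congruences would not combine, and one would instead have to track an incompatible pair of exponents.
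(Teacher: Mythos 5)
Your argument is correct. The paper offers no proof of this statement at all---it is quoted verbatim from Fomin--Zelevinsky with a citation---so what you have written is a self-contained substitute rather than a variant of the paper's argument; it is essentially the standard rank-two specialization of the ``caterpillar lemma'' proof of the Laurent phenomenon. The key points all check out: the induction hypothesis correctly carries four consecutive variables so that the relation $x_{j-4}x_{j-2}=x_{j-3}^{e}+1$ is available, the parity matching $j-3\equiv j-1 \pmod 2$ does guarantee that the two exponents agree, and the computation $x_{j-1}^e\equiv(x_{j-3}^e)^{-1}\equiv -1 \pmod{x_{j-2}}$ is valid in the (not necessarily integral) quotient ring $L/(x_{j-2})$ because $x_{j-3}$ is exhibited there as a unit. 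Two small cosmetic remarks: the appeal to unique factorization is unnecessary---once $x_{j-1}^e+1=x_{j-2}Q$ with $Q\in L$, comparing with $x_{j-1}^e+1=x_{j-2}x_j$ in the field $\QQ(x_1,x_2)$ and cancelling the nonzero element $x_{j-2}$ already gives $x_j=Q\in L$, so divisibility in a domain suffices; and in the reduction to the cluster $\{x_k,x_{k+1}\}$ it is worth saying explicitly that the shifted family $y_j:=x_{j+k-1}$ satisfies the same recursion with $b$ and $c$ interchanged exactly when $k$ is even, which you gesture at and which affects nothing since the argument is symmetric in $b$ and $c$.
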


We will denote by $\ZZ_{\ge0}[x_k^{\pm1},x_{k+1}^{\pm1}]$ the subspace of Laurent polynomials with positive coefficients.  An element of $\QQ(x_1,x_2)$ is a \emph{universal Laurent polynomial} (resp.  \emph{positive universal Laurent polynomial}) if it is contained in 
 $\ZZ[x_k^{\pm1},x_{k+1}^{\pm1}]$ (resp. $\ZZ_{\ge0}[x_k^{\pm1},x_{k+1}^{\pm1}]$) for every $k \in \ZZ$.  A primary result of \cite{BFZ}, specialized to the rank 2 setting, states that $\cA(b,c)$ is precisely the set of univeral Laurent polynomials in $\QQ(x_1,x_2)$.

\begin{theorem}
\cite{LS,Rup} Each cluster variable of $\cA(b,c)$ is positive.
\end{theorem}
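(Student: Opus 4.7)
The plan is to exhibit, for every cluster variable $x_n$, an explicit Laurent expansion in a \emph{fixed} cluster whose coefficients are manifestly non-negative integers, and then to verify that this explicit formula satisfies the defining recurrence~\eqref{eq:exchangerelations}. By Theorem~\ref{th:Laurent} we already know $x_n\in\ZZ[x_1^{\pm1},x_2^{\pm1}]$ for every $n$, so the content is positivity; and since the recurrence~\eqref{eq:exchangerelations} is invariant (up to swapping $b\leftrightarrow c$) under the shift $k\mapsto k+1$, it suffices to prove positivity only in the cluster $\{x_1,x_2\}$.

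Following Lee--Schiffler, I would attach to each cluster variable $x_n$ its denominator vector $\bfd=(d_1,d_2)\in\ZZ_{\geq 0}^{2}$ and the \emph{maximal Dyck path} $\fD(d_1,d_2)$ from $(0,0)$ to $(d_1,d_2)$ lying as close as possible from above to the diagonal. Let $H$, $V$ denote its sets of horizontal and vertical edges. A pair $(H_1,V_1)$ with $H_1\subseteq H$, $V_1\subseteq V$ is declared \emph{compatible} if a certain local ``shadow'' condition holds along the diagonal. The candidate formula is
\begin{equation*}
   x_n \;=\; x_1^{-d_1}x_2^{-d_2}\sum_{(H_1,V_1)\text{ compatible}} x_1^{b|V_1|}\,x_2^{c|H_1|},
\end{equation*}
which is manifestly a positive Laurent polynomial, so the whole question becomes identifying the right-hand side with $x_n$.

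The identification proceeds by induction on $|n|$. Base cases $n\in\{0,1,2,3\}$ are immediate. For the inductive step, one must show that two adjacent candidate expressions multiply to satisfy $x_{n-1}x_{n+1}=x_n^{b}+1$ (or the $c$-analog). I expect this combinatorial identity to be the technical heart of the argument and the main obstacle: one needs a bijection matching compatible pairs on $\fD(d_1(n+1),d_2(n+1))$ with pairs of compatible pairs on the two adjacent Dyck paths, with those compatible pairs on $\fD(d_1(n+1),d_2(n+1))$ having a distinguished degenerate structure accounting for the summand $+1$. Designing this bijection requires careful local moves on Dyck paths and bookkeeping of the shadow condition, and is where all the real work lives.

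A parallel route (Rupel) would instead realize each non-initial $x_n$ as a Caldero--Chapoton cluster character of an indecomposable rigid representation of a Kronecker-type valued quiver; its Laurent coefficients are then Euler characteristics of quiver Grassmannians, and positivity follows once one exhibits cellular (or polynomial-count) decompositions of these Grassmannians. In either approach, positivity in one cluster implies positivity in every cluster by the shift symmetry mentioned above, completing the proof.
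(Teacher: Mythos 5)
The paper does not prove this theorem; it is imported verbatim from the cited works of Lee--Schiffler and Rupel, so the only meaningful comparison is with those proofs. Your outline correctly reconstructs their architecture: the reduction to a single cluster via the shift symmetry $k\mapsto k+1$ (with $b\leftrightarrow c$) is valid, the base cases $n\in\{0,1,2,3\}$ are indeed immediate, and the idea of exhibiting a manifestly non-negative closed formula indexed by compatible pairs on the maximal Dyck path $\fD(d_1,d_2)$, then verifying the exchange relation, is exactly the Lee--Schiffler route (and the quiver-Grassmannian alternative is exactly Rupel's).

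However, as written this is a proof plan rather than a proof, and the gap sits precisely where the theorem lives. First, the compatibility condition is never defined --- ``a certain local shadow condition holds'' is a placeholder, and without the precise condition the candidate sum is not a well-defined Laurent polynomial, let alone the right one. Second, and more seriously, the identification of the candidate formula with $x_n$ is exactly equivalent to the theorem: the right-hand side is positive by construction, so all content is in showing it satisfies $x_{n-1}x_{n+1}=x_n^b+1$ (and its $c$-analogue). You state only that you ``expect'' the required bijection between compatible pairs on $\fD(d_1(n+1),d_2(n+1))$ and pairs of compatible pairs on the adjacent Dyck paths, and correctly flag this as ``where all the real work lives''; in Lee--Schiffler this occupies the bulk of the paper and involves delicate surgery on Dyck paths (and in Rupel's approach the analogous burden is producing the counting/cell-decomposition argument for the quiver Grassmannians, which you likewise defer). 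So nothing beyond the trivial reductions has actually been established; to complete the argument you must either carry out that bijection or fall back on citing \cite{LS,Rup}, as the paper itself does.
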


An element of $\ZZ[x_1^{\pm1},x_2^{\pm1}]$ is called \emph{pointed at
$(a_1,a_2)\in\ZZ^2$} if it can be written in the form
\[
  x_1^{-a_1}x_2^{-a_2}\sum\limits_{p_1,p_2\ge0}c(p_1,p_2)x_1^{bp_1}x_2^{cp_2},
\]
where $c(p_1,p_2)\in\ZZ$ with $c(0,0)=1$.   
\begin{proposition}\cite[Proposition~1.5]{LLZ}
  Let $z$ be pointed at $(a_1,a_2)\in\ZZ^2$ and suppose
  $z\in\ZZ_{\ge0}[x_0^{\pm1},x_1^{\pm1}]
  \cap\ZZ_{\ge0}[x_1^{\pm1},x_2^{\pm1}]
  \cap\ZZ_{\ge0}[x_2^{\pm1},x_3^{\pm1}]$.
  Then the pointed coefficients $c(p_1,p_2)$ satisfy the following recursive
  inequality:
  \begin{align}
    \label{eq:consecutive positivity inequality}
    c(p_1,p_2)\ge\max\bigg(
    &\sum\limits_{k=1}^{p_1} (-1)^{k-1}c(p_1-k,p_2){a_2-cp_2+k-1\choose k},\\
    \nonumber&\sum\limits_{ j =1}^{p_2} (-1)^{ j -1}c(p_1,p_2- j ){a_1-bp_1+ j -1\choose  j }\bigg).
  \end{align}
\end{proposition}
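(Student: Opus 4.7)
The plan is to exploit the exchange relation $x_0 x_2 = x_1^b + 1$ to re-expand the pointed form of $z$ in the cluster $\{x_0, x_1\}$, then read off the first inequality from the positivity hypothesis. The symmetric inequality will follow from the parallel computation using $x_1 x_3 = x_2^c + 1$.

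Substituting $x_2 = (1 + x_1^b)/x_0$ into the pointed expansion gives
\[
    z = \sum_{p_1, p_2 \ge 0} c(p_1, p_2)\, x_0^{a_2 - cp_2}\, (1 + x_1^b)^{cp_2 - a_2}\, x_1^{bp_1 - a_1}.
\]
The exponent $a_2 - cp_2$ of $x_0$ is injective in $p_2$, so the coefficient of $x_0^{a_2 - cp_2}$ in $z$ is simply
\[
    h_{p_2}(x_1) = x_1^{-a_1}\,(1 + x_1^b)^{cp_2 - a_2}\sum_{p_1 \ge 0} c(p_1, p_2)\, x_1^{bp_1}.
\]
By hypothesis $z \in \ZZ_{\ge 0}[x_0^{\pm 1}, x_1^{\pm 1}]$, so $h_{p_2}(x_1)$, and hence $x_1^{a_1} h_{p_2}(x_1)$, is a nonnegative Laurent polynomial in $x_1$.

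Setting $N := a_2 - cp_2$ and rearranging yields $x_1^{a_1} h_{p_2}(x_1) = (1 + x_1^b)^{-N} \sum_{p_1 \ge 0} c(p_1, p_2)\, x_1^{bp_1}$. Expanding $(1 + u)^{-N} = \sum_{k \ge 0} (-1)^k \binom{N + k - 1}{k} u^k$ (as a formal power series when $N > 0$, or as a polynomial when $N \le 0$) and extracting the coefficient of $x_1^{bp_1}$ gives
\[
    c(p_1, p_2) + \sum_{k=1}^{p_1} (-1)^k \binom{a_2 - cp_2 + k - 1}{k}\, c(p_1 - k, p_2) \ge 0,
\]
which, after moving the sum to the other side, is the first branch of the asserted maximum. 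Running the analogous argument with the substitution $x_1 = (1 + x_2^c)/x_3$ and positivity in $\ZZ_{\ge 0}[x_2^{\pm 1}, x_3^{\pm 1}]$ delivers the second branch, and taking the maximum of the two finishes the proof.

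I expect the main subtlety to be handling the two sign regimes of $N$ uniformly. When $N > 0$ the formal series $(1 + x_1^b)^{-N}$ is infinite, yet it must multiply the polynomial $\sum c(p_1, p_2)\, x_1^{bp_1}$ to produce a finite Laurent polynomial; the term-by-term coefficient extraction nonetheless remains valid. When $N \le 0$ one expands $(1 + x_1^b)^{-N}$ as a plain polynomial, and the identity $(-1)^{k-1}\binom{-M + k - 1}{k} = -\binom{M}{k}$ (with $M = -N \ge 0$), a short calculation from the definition of the generalized binomial coefficient, shows that the same closed-form inequality governs both cases.
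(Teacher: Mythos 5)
Your argument is correct: the paper itself does not reprove this statement but simply cites \cite{LLZ}, and your method---substituting $x_2=(1+x_1^b)/x_0$ (resp.\ $x_1=(1+x_2^c)/x_3$) into the pointed expansion, isolating the coefficient of each power of $x_0$ (resp.\ $x_3$), and extracting the coefficient of $x_1^{bp_1}$ (resp.\ $x_2^{cp_2}$) via the generalized binomial series---is essentially the proof given in \cite{LLZ}. The two subtleties you flag (validity of term-by-term extraction when $(1+x_1^b)^{-N}$ is an infinite series, and the uniform treatment of the two sign regimes of $N$ via $(-1)^k\binom{-M+k-1}{k}=\binom{M}{k}$) are handled correctly.
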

A positive element of $\cA(b,c)$ is called \emph{indecomposable} if it cannot
be written as a sum of two positive elements.  In the search for positive bases
of $\cA(b,c)$ one is naturally led to investigate the indecomposable positive
elements.  A sufficient condition for a positive pointed element to be
indecomposable is the inequality \eqref{eq:consecutive positivity inequality}
being an equality.  It turns out that this requirement alone uniquely determines
a collection of elements of $\cA(b,c)$ with nice properties.

\begin{theorem}\label{th:greedy}\cite[Theorem~1.7]{LLZ}
  For any $(a_1,a_2)\in\ZZ^2$ there exists a unique indecomposable positive
  element $x[a_1,a_2]\in\cA(b,c)$ which is pointed at $(a_1,a_2)$ and whose
  pointed coefficients satisfy the recursion
  \begin{align}
    \label{eq:greedy recursion}
    c(p_1,p_2)=\max\bigg(
    &\sum\limits_{k=1}^{p_1} (-1)^{k-1}c(p_1-k,p_2){a_2-cp_2+k-1\choose k},\\
    \nonumber&\sum\limits_{ j =1}^{p_2} (-1)^{ j -1}c(p_1,p_2- j ){a_1-bp_1+ j -1\choose  j }\bigg).
  \end{align}
  Moreover, the collection $\{x[a_1,a_2]:(a_1,a_2)\in\ZZ^2\}$ is a basis of
  $\cA(b,c)$ which contains the cluster monomials and is independent of the
  choice of an initial cluster.
\end{theorem}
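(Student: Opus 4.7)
The plan is to construct $x[a_1,a_2]$ directly from the recursion \eqref{eq:greedy recursion} and then verify its advertised properties. First, use \eqref{eq:greedy recursion} together with the initial condition $c(0,0)=1$ to define coefficients $c(p_1,p_2)$ for every $(p_1,p_2)\in\ZZ_{\ge 0}^2$; induction on $p_1+p_2$ shows these are uniquely determined. Next, verify that the coefficients vanish outside a bounded region---so that the formal sum is actually a Laurent polynomial---by a case analysis depending on the cone of $\ZZ^2$ in which $(a_1,a_2)$ lies: for $a_1\le 0$ or $a_2\le 0$ the support collapses drastically, while for $a_1,a_2\ge 0$ the support is cut out by an explicit polygon whose edges depend on $b$, $c$, $a_1$, $a_2$. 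Set $x[a_1,a_2]:=x_1^{-a_1}x_2^{-a_2}\sum_{p_1,p_2\ge 0} c(p_1,p_2)\,x_1^{bp_1}x_2^{cp_2}$; uniqueness among elements pointed at $(a_1,a_2)$ and satisfying the recursion is then tautological.

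The heart of the proof is to show that $x[a_1,a_2]\in\cA(b,c)$ and is positive and indecomposable. For the universal Laurent property, by iterating the exchange relations it suffices to verify Laurentness in the two adjacent clusters $\{x_0,x_1\}$ and $\{x_2,x_3\}$; the passage from the inequality \eqref{eq:consecutive positivity inequality} of Proposition~2.3 to the equality in \eqref{eq:greedy recursion} is precisely the saturation needed so that substituting \eqref{eq:exchangerelations} produces no poles in either direction. Positivity $c(p_1,p_2)\ge 0$ is then established by induction on $p_1+p_2$. Indecomposability follows from minimality: any positive element of $\cA(b,c)$ pointed at $(a_1,a_2)$ satisfies the Proposition~2.3 inequalities, hence its coefficients termwise dominate those of $x[a_1,a_2]$, forbidding a decomposition into two positive summands.

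For the basis property, linear independence follows from the distinct pointings: in any alleged relation $\sum_i \lambda_i\, x[a_1^{(i)},a_2^{(i)}]=0$, pick $(a_1^{(i)},a_2^{(i)})$ minimal in the componentwise order and extract the coefficient of $x_1^{-a_1^{(i)}}x_2^{-a_2^{(i)}}$ to force $\lambda_i=0$, then iterate. Spanning follows by a dual leading-term argument: given any $z\in\cA(b,c)$, peel off an extremal monomial of its initial-cluster expansion, subtract the corresponding $x[a_1,a_2]$, and iterate on the remainder. Inclusion of cluster monomials is verified by directly computing their pointings and checking that their known Laurent coefficients satisfy \eqref{eq:greedy recursion}. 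Finally, independence of the choice of initial cluster is a consequence of the intrinsic characterization: ``indecomposable positive universal Laurent polynomial pointed at $(a_1,a_2)$ with coefficients saturating Proposition~2.3'' is manifestly a mutation-invariant property, once one tracks how the index $(a_1,a_2)$ transforms under a change of cluster.

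The chief obstacle is jointly verifying positivity and the universal Laurent property for the recursively-defined element. The cleanest route, followed by Lee--Li--Zelevinsky, is to exhibit an explicit combinatorial model---enumerating compatible pairs on the maximal Dyck path of slope $a_2/a_1$---giving a manifestly nonnegative closed formula for $c(p_1,p_2)$ whose satisfaction of both the recursion and the adjacent-cluster Laurentness conditions can then be checked combinatorially, bypassing the need for any delicate induction on mutation distance.
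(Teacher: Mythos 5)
This theorem is quoted from \cite[Theorem~1.7]{LLZ}; the paper gives no proof of it, only the citation (and the gloss in Remark~\ref{remark:Dyck} that the real content lies in the compatible-pair/Dyck-path formula for the $c(p_1,p_2)$). So there is no in-paper argument to compare against; your sketch can only be measured against the Lee--Li--Zelevinsky strategy, which it does reconstruct in outline: define the coefficients by the recursion, reduce universal Laurentness to the three consecutive clusters $\{x_0,x_1\},\{x_1,x_2\},\{x_2,x_3\}$ via the Berenstein--Fomin--Zelevinsky identification of $\cA(b,c)$ with the upper bound, and discharge positivity and Laurentness through the combinatorial model. Correctly flagging that last step as ``the chief obstacle'' is the right call.

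Two assertions in the middle of your sketch, however, claim as automatic exactly what is hard. First, the passage from the inequality \eqref{eq:consecutive positivity inequality} to the equality \eqref{eq:greedy recursion} is \emph{not} ``precisely the saturation needed so that substituting the exchange relations produces no poles'': Proposition~2.3 only says the inequality is \emph{necessary} for positive Laurentness in the adjacent clusters; that the minimal (saturating) choice is \emph{sufficient} is the main theorem of \cite{LLZ} and requires the Dyck-path formula together with a genuine verification of Laurentness in $\{x_0,x_1\}$ and $\{x_2,x_3\}$. Second, independence of the initial cluster is not ``manifest'' from the characterization, since that characterization is phrased relative to a chosen cluster and its two neighbours; mutation invariance of the greedy family is a separate theorem in \cite{LLZ}, again proved combinatorially. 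Your indecomposability argument also needs care: a positive summand of $x[a_1,a_2]$ need not itself be pointed at $(a_1,a_2)$, so ``termwise domination'' does not apply to it directly; one must first argue about where the pointed monomial and the support of each summand can sit. As a blind outline the proposal identifies the correct architecture, but these three steps are asserted rather than proved, and they are where the substance of \cite{LLZ} lives.
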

We will call $x[a_1,a_2]$ the \emph{greedy element} pointed at $(a_1,a_2)$ and
call $\{x[a_1,a_2]:(a_1,a_2)\in\ZZ^2\}$ the \emph{greedy basis} of $\cA(b,c)$. 
In view of the definition of pointed elements, $(a_1,a_2)$ is the
\emph{$\bfd$-vector} of $x[a_1,a_2]$; we refer to \cite{FZ4} for the
 definitions and basic properties of $\bfd$-vectors.
In order to better connect with the scattering diagram approach from Section 3, we now switch our point of view and consider ordinary support rather than pointed support.  Given a Laurent polynomial $f=\sum_{m\in M}c_mx^m$ in $\ZZ[x_1^{\pm1},x_2^{\pm1}]$, the \emph{support} of $f$ is the set
\[ \{m\in M \mid c_m\neq 0\}. \]

\begin{theorem}\label{th:Newton polygons}\cite[Proposition~4.1]{LLZ}, \cite[Corollary~3.5]{LLZ2}
  For $(a_1,a_2)\in\ZZ^2$, the smallest (possibly degenerate) lattice quadrilateral $R_{a_1,a_2}$ containing the support of $x[a_1,a_2]$ is determined as follows.
  \begin{enumerate}
    \item If $a_1 \leq 0$ and $a_2 \leq 0$, then $R_{a_1,a_2} = \{(-a_1,-a_2)\}$.
    \item If $a_1 \leq 0 < a_2$, then $R_{a_1,a_2} = \{(p_1,-a_2): -a_1\leq p_1\leq -a_1+ba_2\}$.
    \item If $a_2 \leq 0 < a_1$, then $R_{a_1,a_2} = \{(-a_1,p_2): -a_2 \leq p_2 \leq -a_2+ca_1\}$.
    \item If $0<ba_2\leq a_1$, then $R_{a_1,a_2}=\left\{(p_1,p_2): -a_1\le p_1\le -a_1+ba_2,\; -a_2\le p_2\le -a_2-cp_1\right\}$.
    \item If $0<ca_1\leq a_2$, then $R_{a_1,a_2}=\left\{(p_1,p_2): -a_1\le p_1\le -a_1-bp_2,\; -a_2\le p_2\le -a_2+ca_1\right\}$.
    \item If $0 < a_1 < ba_2$ and $0 < a_2 < ca_1$, then
    $$\aligned
    R_{a_1,a_2}=&\bigg\{(p_1,p_2)\bigg|\; -a_1\le p_1<0,\; -a_2\le p_2<\Big(\frac{a_2}{a_1}-c\Big)p_1\bigg\}\\
    &\bigcup\bigg\{(p_1,p_2): -a_1\le p_1<\Big(\frac{a_1}{a_2}-b\Big)p_2,\; -a_2\le p_2<0\bigg\}\\
    &\hspace{1.35em}\bigcup\Big\{(-a_1+ba_2,-a_2),(-a_1,-a_2+ca_1)\Big\}.
    \endaligned
    $$
  \end{enumerate}
  Moreover, if $z\in\cA(b,c)$ is pointed at $(a_1,a_2)$ with support contained in $R_{a_1,a_2}$, then $z=x[a_1,a_2]$. 
\end{theorem}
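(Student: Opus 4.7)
My plan is to prove the two halves of the theorem—the explicit description of $R_{a_1,a_2}$ and the uniqueness statement—separately, both resting on the greedy recursion \eqref{eq:greedy recursion} of \cref{th:greedy}. For the polygon description, I would induct on $p_1+p_2$ and track when $c(p_1,p_2)$ is forced to vanish. The binomial $\binom{a_2-cp_2+k-1}{k}$ is strictly positive for $a_2-cp_2\ge 1$, vanishes for $1-k\le a_2-cp_2\le 0$, and has sign $(-1)^k$ for $a_2-cp_2\le -k$; the partner $\binom{a_1-bp_1+j-1}{j}$ behaves symmetrically. Identifying the $(p_1,p_2)$ for which both alternating sums on the right-hand side of \eqref{eq:greedy recursion} are collectively nonpositive forces the vanishing of $c(p_1,p_2)$ and carves out the claimed region. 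Cases (1)-(3) correspond to cluster monomials, directly computable from \eqref{eq:exchangerelations}; (4) and (5) are also cluster monomials, in other clusters, with triangular support; (6) is the genuinely subtle ``imaginary'' case whose support is the twin-wing shape together with the two corner cluster monomials at $(-a_1+ba_2,-a_2)$ and $(-a_1,-a_2+ca_1)$.

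For the uniqueness half, I set $w:=z-x[a_1,a_2]$ and expand $w=\sum\mu_{a_1',a_2'}\,x[a_1',a_2']$ in the greedy basis (a finite sum, since $w\in\cA(b,c)$ has bounded support). The two pointed coefficients at $(-a_1,-a_2)$ cancel, so $\operatorname{supp}(w)\subseteq R_{a_1,a_2}\setminus\{(-a_1,-a_2)\}$. By the first half, every $R_{a_1'',a_2''}$ has $(-a_1'',-a_2'')$ as its unique componentwise-smallest lattice point (a fact one checks case-by-case in the polygon descriptions), so if $(a_1',a_2')$ is componentwise maximal in the finite set $\operatorname{supp}(\mu)$, the monomial $x^{(-a_1',-a_2')}$ can receive contribution only from the summand $\mu_{a_1',a_2'}\,x[a_1',a_2']$. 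Hence $\mu_{a_1',a_2'}\neq 0$ forces $(-a_1',-a_2')\in R_{a_1,a_2}\setminus\{(-a_1,-a_2)\}$. Iterating this peeling argument reduces $w$ step by step, and a case-by-case check against the polygons in (1)-(6) shows that no $(a_1',a_2')\neq(a_1,a_2)$ can keep every residual support inside $R_{a_1,a_2}$; thus all $\mu_{a_1',a_2'}=0$ and $w=0$.

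The principal obstacle is case (6). The non-convex twin-wing shape means that deducing $R_{a_1,a_2}$ from the recursion, and ruling out nesting of neighboring greedy polygons in the uniqueness step, both require careful control along the boundary lines $p_1=(a_1/a_2-b)p_2$ and $p_2=(a_2/a_1-c)p_1$. The two corner monomials land precisely on these lines, leaving only a narrow margin in which the sharp inequalities must be verified; nearby indices $(a_1',a_2')$ also in the imaginary cone have polygons that press against these same lines, so the nesting-exclusion check is tight.
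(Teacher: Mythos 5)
Your architecture for the uniqueness half is essentially the paper's: expand $z-x[a_1,a_2]$ in the greedy basis, use the support polygons of the first half to locate an extremal greedy summand $x[a_1',a_2']$ whose pointed monomial $x^{(-a_1',-a_2')}$ must survive into $R_{a_1,a_2}\setminus\{(-a_1,-a_2)\}$, and then derive a contradiction from the geometry of the polygons. Your extraction of that summand (via the componentwise-maximal index and the observation that $(-a_1'',-a_2'')$ is the componentwise-smallest point of every $R_{a_1'',a_2''}$) is correct and slightly different in flavor from the paper's choice of the lexicographically minimal differing monomial, but it reaches the same intermediate conclusion. (For the first half, note that the paper does not prove it — it is exactly the content of the cited \cite[Proposition~4.1]{LLZ} and \cite[Corollary~3.5]{LLZ2}. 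Your plan to rederive it by induction on $p_1+p_2$ from \eqref{eq:greedy recursion} is a substantial project in itself, and the sign analysis you sketch does not come close to establishing the vanishing outside the non-convex region in case (6); you are better off treating this half as a citation, as the statement itself does.)

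The genuine gap is that the decisive step of the uniqueness argument is asserted rather than carried out. From $(-a_1',-a_2')\in R_{a_1,a_2}\setminus\{(-a_1,-a_2)\}$ you must produce a specific monomial of $x[a_1',a_2']$ that provably lies outside $R_{a_1,a_2}$ and cannot be canceled by the other summands; your ``case-by-case check against the polygons'' and ``nesting-exclusion check'' name this obligation without discharging it, and it is precisely where all the content lives. The paper does it with one uniform computation: depending on whether $(-a_1',-a_2')$ lies weakly North or weakly East of the segment $OB$, one compares the corner $C'=(-a_1',-a_2'+ca_1')$ against the ray $OC$, or $A'=(-a_1'+ba_2',-a_2')$ against $OA$, and a cross-product inequality such as $-a_1(-a_2'+ca_1')\le -a_1'(-a_2+ca_1)$ shows the corner lies on or beyond the boundary of $R_{a_1,a_2}$, with the boundary case forcing $(a_1',a_2')=(a_1,a_2)$. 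Without this inequality (or an equivalent) the proof does not close. Two further cautions: ``iterating the peeling'' does not work as stated, because after subtracting $\mu_{a_1',a_2'}x[a_1',a_2']$ you lose the hypothesis that the remaining support lies in $R_{a_1,a_2}$ — the contradiction must come from a single well-chosen summand; and at that single step you must still rule out cancelation at the protruding corner, which requires an extremality property of the chosen index that your componentwise-maximal choice does not automatically supply.
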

\begin{figure}[h!]
  \centering
  \begin{tikzpicture}[scale=.7]
    \begin{scope}[shift={(-2.75, 5.5)}]
      \usetikzlibrary{patterns}
      \draw[-] (-0.5,0.25) -- (3,0.25);
      \draw[-] (0,-0.25) -- (0,3);
      \fill[white] (0,0.25) circle (2pt);
      \draw (0,0.25) circle (2pt);
      \draw (0,0.25) node[anchor=south west] {\tiny $O$};
      \fill (1.5,1.75) circle (2pt);
      \draw (1.5,1.75) node[anchor=east] {\tiny $B$};
      \draw (1.07,-.6) node[anchor=north] {\footnotesize (1) $a_1,a_2\le0$};
    \end{scope}
    \begin{scope}[shift={(2.75, 5.5)}]
      \usetikzlibrary{patterns}
      \draw[-] (-0.5,1.5) -- (3,1.5);
      \draw[-] (0,-0.25) -- (0,3);
      \fill[white] (0,1.5) circle (2pt);
      \draw (0,1.5) circle (2pt);
      \draw (0,1.5) node[anchor=south west] {\tiny $O$};
      \fill (2.25,0.5) circle (2pt);
      \draw (2.25,0.5) node[anchor=west] {\tiny$A$};
      \fill (0.75,0.5) circle (2pt);
      \draw (0.75,0.5) node[anchor=east] {\tiny$B$};
      \draw [thick] (0.75,0.5) -- (2.25,0.5);
      \draw (1.27,-.6) node[anchor=north] {\footnotesize (2) $a_1\le 0<a_2$};
    \end{scope}
    \begin{scope}[shift={(8.25,5.5)}]
      \usetikzlibrary{patterns}
      \draw [thick] (0.25,1) -- (0.25,2.5);
      \draw[-] (-0.5,0.25) -- (3,0.25);
      \draw[-] (1,-0.25) -- (1,3);
      \fill[white] (1,0.25) circle (2pt);
      \draw (1,0.25) circle (2pt);
      \draw (1,0.25) node[anchor=south west] {\tiny $O$};
      \fill (0.25,1) circle (2pt);
      \draw (0.25,1) node[anchor=east] {\tiny$B$};
      \fill (0.25,2.5) circle (2pt);
      \draw (0.25,2.5) node[anchor=east] {\tiny$C$};
      \draw (1.27,-.6) node[anchor=north] {\footnotesize (3) $a_2\le0<a_1$};
    \end{scope}
    \begin{scope}[shift={(0,0)}]
      \usetikzlibrary{patterns}
      \fill[black!10] (0,3)--(1.5,1.5)--(1.5,0)--(0,0)--(0,3);
      \draw[thick] (0,3)--(1.5,1.5)--(1.5,0)--(0,0)--(0,3);
      \draw[dashed,thick](0,3)--(2.15,1.25) (2.15,1.15)--(1.5,0);
      \draw[-] (-0.5,1.2) -- (3.5,1.2);
      \draw[-] (2.2,-0.25) -- (2.2,3.5);
      \fill[white] (2.2,1.2) circle (2pt);
      \draw (2.2,1.2) circle (2pt);
      \draw (2.2,1.2) node[anchor=north west] {\tiny$O$};
      \fill (1.5,0) circle (2pt);
      \draw (1.5,0) node[anchor=west] {\tiny$A$};
      \fill (0,0) circle (2pt);
      \draw (0,0) node[anchor=east] {\tiny$B$};
      \fill (0,3) circle (2pt);
      \draw (0,3) node[anchor=east] {\tiny$C$};
      \fill (1.5,1.5) circle (2pt);
      \draw (1.5,1.5) node[anchor=east] {\tiny$D_1$};
      \draw (1.37,-0.5) node[anchor=north] {\footnotesize (4) $0<ba_2\leq a_1$};
    \end{scope}
    \begin{scope}[shift={(5.5,0)}]
      \usetikzlibrary{patterns}
      \fill[black!10] (3,0)--(1.3,1)--(0,1)--(0,0)--(3,0);
      \draw[thick] (3,0)--(1.3,1)--(0,1)--(0,0)--(3,0);
      \draw[dashed,thick](3,0)--(0.75,1.75)--(0,1);
      \draw[-] (-0.5,1.75) -- (3.5,1.75);
      \draw[-] (0.75,-0.25) -- (0.75,3.5);
      \fill[white] (0.75,1.75) circle (2pt);
      \draw (0.75,1.75) circle (2pt);
      \draw (0.75,1.75) node[anchor=south west] {\tiny$O$};
      \fill (3,0) circle (2pt);
      \draw (3,0) node[anchor=west] {\tiny$A$};
      \fill (0,0) circle (2pt);
      \draw (0,0) node[anchor=east] {\tiny$B$};
      \fill (0,1) circle (2pt);
      \draw (0,1) node[anchor=east] {\tiny$C$};
      \fill (1.3,1) circle (2pt);
      \draw (1.3,1) node[anchor=north] {\tiny$D_2$};
      \draw (1.37,-.5) node[anchor=north] {\footnotesize (5) $0<ca_1\leq a_2$};
    \end{scope}
    \begin{scope}[shift={(0,-5.5)}]
      \usetikzlibrary{patterns}
      \fill[black!10]  (0,3)--(1.5,2.3)--(2,0)--(0,0)--(0,3);
      \draw[thick] (0,3)--(0,0)--(2,0); 
      \draw[dashed,thick] (0,3)--(1.5,2.3)--(2,0);
      \draw[-] (-0.5,2.3) -- (3.5,2.3);
      \draw[-] (1.5,-0.25) -- (1.5,3.5);
      \fill[white] (1.5,2.3) circle (2pt);
      \draw (1.5,2.3) circle (2pt);
      \draw (1.5,2.3) node[anchor=south west] {\tiny$O$};
      \fill (2,0) circle (2pt);
      \draw (2,0) node[anchor=west] {\tiny$A$};
      \fill (0,0) circle (2pt);
      \draw (0,0) node[anchor=east] {\tiny$B$};
      \fill (0,3) circle (2pt);
      \draw (0,3) node[anchor=east] {\tiny$C$};
      \draw (1.42,-.5) node[anchor=north] {\footnotesize (6) $0<a_1<ba_2$,};
      \draw (1.46,-1) node[anchor=north] {\footnotesize \hspace{10pt} $0<a_2<ca_1$,};
      \draw (1.17,-1.5) node[anchor=north] {\footnotesize \hspace{10pt} \tiny{$(a_1,a_2):$ non-imaginary root}};
    \end{scope}
    \begin{scope}[shift={(5.5,-5.5)}]
      \usetikzlibrary{patterns}
      \fill[black!10]  (0,3)--(1,0.5)--(2.25,0)--(0,0)--(0,3);
      \draw[thick] (0,3)--(0,0)--(2.25,0);
      \draw[dashed,thick] (0,3)--(1,0.5)--(2.25,0);
      \draw[-] (-0.5,0.5) -- (3.5,0.5);
      \draw[-] (1,-0.25) -- (1,3.5);
      \fill[white] (1,0.5) circle (2pt);
      \draw (1,0.5) circle (2pt);
      \draw (1,0.5) node[anchor=south west] {\tiny$O$};
      \fill (2.25,0) circle (2pt);
      \draw (2.25,0) node[anchor=west] {\tiny$A$};
      \fill (0,0) circle (2pt);
      \draw (0,0) node[anchor=east] {\tiny$B$};
      \fill (0,3) circle (2pt);
      \draw (0,3) node[anchor=east] {\tiny$C$};
      \draw (1.47,-.5) node[anchor=north] {\footnotesize (6) $0<a_1<ba_2$,};
      \draw (1.51,-1) node[anchor=north] {\footnotesize \hspace{10pt} $0<a_2<ca_1$,};
      \draw (1.27,-1.5) node[anchor=north] {\footnotesize \hspace{10pt} \tiny{$(a_1,a_2):$ imaginary root}};
    \end{scope}
  \end{tikzpicture}
  \caption{Consider the points $O=(0,0)$, $A=(-a_1+ba_2,-a_2)$, $B=(-a_1,-a_2)$, $C=(-a_1,-a_2+ca_1)$, $D_1=(-a_1+ba_{2},ca_{1}-(bc+1)a_{2})$, and $D_2=(ba_{2}-(bc+1)a_{1},-a_2+ca_{1})$.  Then the support region of $x[a_1,a_2]$ can be visualized as above.} 
  \label{fig:greedy polytopes}
\end{figure}
\begin{proof}
  The first claim is the content of \cite[Proposition~4.1]{LLZ} and \cite[Corollary~3.5]{LLZ2}.

  Suppose $z\in\cA(b,c)$ is pointed at $(a_1,a_2)$ and that the support of $z$ is contained in $R_{a_1,a_2}$.  Suppose $z\ne x[a_1,a_2]$.  Then there exists a monomial $x_1^{-a'_1}x_2^{-a'_2}$ appearing in $z$ with a different coefficient than in $x[a_1,a_2]$.  Our assumptions on $z$ imply for any such monomial that we have $a'_1<a_1$ or $a'_2<a_2$.  Choose a monomial with $(a'_1,a'_2)$ minimal in lexicographic order.  Then in the greedy basis expansion of $z$ the element $x[a'_1,a'_2]$ must appear with nonzero coefficient. 

  Below we refer to the points $O,A,B,C$ from Figure~\ref{fig:greedy polytopes}.  To reach a contradiction, there are two cases to consider.
  \begin{itemize}
    \item If $(-a'_1,-a'_2)$ lies on or North of the line segment $OB$, i.e. $a_1a'_2\le a'_1a_2$, then we consider the point $C'=(-a'_1,-a'_2+ca'_2)$ at the Northern boundary of the support region $R_{a'_1,a'_2}$ of $x[a'_1,a'_2]$ and compare with the line segment $OC$.  In this case, we have
    \[-a_1(-a'_2+ca'_1)=a_1a'_2-ca_1a'_1\le a'_1a_2-ca_1a'_1=-a'_1(-a_2+ca_1)\]
    and thus $C'$ lies on or North of $OC$.  If $C'$ is North of $OC$ or $C'\ne C$ is on $OC$, then it lies outside $R_{a_1,a_2}$ which is impossible.  Thus we must have $C'=C$, but this implies $(a'_1,a'_2)=(a_1,a_2)$ which clearly must be false.
    \item If $(-a'_1,-a'_2)$ lies on or East of the line segment $OB$, i.e. $a'_1a_2\le a_1a'_2$, then we consider the point $A'=(-a'_1+ba'_2,-a'_2)$ at the Eastern boundary of the support region $R_{a'_1,a'_2}$ of $x[a'_1,a'_2]$ and compare with the line segment $OA$.  In this case, we have
    \[(-a'_1+ba'_2)(-a_2)=a'_1a_2-ba_2a'_2\le a_1a'_2-ba_2a'_2=(-a_1+ba_2)(-a'_2)\]
    and thus $A'$ lies on or East of $OA$.  If $A'$ is East of $OA$ or $A'\ne A$ is on $OA$, then it lies outside $R_{a_1,a_2}$ which is impossible.  Thus we must have $A'=A$, but this implies $(a'_1,a'_2)=(a_1,a_2)$ which is clearly false.
  \end{itemize}
  It follows that $z=x[a_1,a_2]$.
\end{proof}
The proof of Theorem~\ref{th:Newton polygons} actually establishes the following stronger result, which never uses the special `pointed' form, thus allowing for support anywhere in the region $R_{a_1,a_2}$.
\begin{scholium}\label{sch}
  If $z\in\cA(b,c)$ is any element containing the monomial $x_1^{-a_1}x_2^{-a_2}$ with coefficient 1 and whose support is contained in the half-open quadrilateral $OABC$ from Figure~\ref{fig:greedy polytopes} associated to $(a_1,a_2)$, then $z=x[a_1,a_2]$. 
\end{scholium}

\begin{remark}\label{remark:Dyck}
The existence of integers $c(p_1,p_2)$ satisfying the recursive equations \eqref{eq:greedy recursion}, and thus the existence of the greedy basis itself, is quite non-trivial.  The authors of \cite{LLZ} characterize each $c(p_1,p_2)$ as the solution to an enumerative problem; specifically, the number of certain `compatible pairs of edges' inside a type of lattice path called a `maximal Dyck path'.

This enumerative description not only establishes the existence of the greedy basis, but shows that the coefficients $c(p_1,p_2)$ are manifestly non-negative.  Finding naturally-defined bases for cluster algebras whose elements have positive coefficients has been one of the core goals of the theory since its inception.  \end{remark}

\section{Scattering diagrams and broken lines}

In this section we describe the theta basis of a rank 2 cluster algebra.  
We use \cite{GHKK} as a reference, adapting 
the notation to the rank 2 situation.

Recall from the previous section the lattice $M \cong \ZZ^2$ such that $\ZZ[M] \cong \ZZ[x_1^{\pm 1}, x_2^{\pm 1}]$.  We write $N = \Hom (M, \ZZ)$ for its dual lattice, $M_{\RR} := M\otimes\RR$, $N_{\RR} := N\otimes\RR$, and we denote the standard pairing of $m\in M$ and $n\in N$ by $m \cdot n$.  Given a strictly convex rational cone $\sigma \subsetneq M_{\RR}$, we write $P=P_{\sigma}=\sigma \cap M$.  Let $\widehat{\ZZ[P]}$ denote the completion of the monoid ring $\ZZ[P]$ at the maximal monomial ideal $\fm$ generated by $\left\{x^m \,|\, m\in P\smallsetminus\{0\}\right\}$.

The following are special cases of definitions which originally appeared in
\cite{KS}, \cite{GS}.

\begin{definition}
  \label{walldef}
  A \emph{wall} is a pair $(\fd, f_{\fd})$, where 
  \begin{itemize}

    \item 
      $\fd \subset M_{\mathbb{R}}$ is either a ray $\RR_{\le 0} w$ or a line
      $\RR w$ with $w\in \sigma \cap(M\smallsetminus 0)$;

    \item 
      $f_{\fd} \in \widehat{\ZZ [P]}$ is such that 
      \[ 
        f_{\fd} = f_{\fd}(x^w) = 1 + \sum_{k\geq 1} c_k x^{k w},
      \] 
      for some $c_k \in \ZZ$. 
  \end{itemize}
  The set $\fd \subset M_{\mathbb{R}}$ is called the \emph{support} of the wall
  $(\fd, f_{\fd})$. (Note the different use of the word ``support" in this geometric context.)
\end{definition}

\begin{definition}
  \label{def:scattering_diagram}
  A scattering diagram $\fD$ is a collection of walls such that, for each $k \geq
  0$, the set
  \[
    \{ (\fd, f_{\fd}) \in \fD\, |\, f_{\fd} \neq 1 \bmod \fm^k \}
  \]
  is finite. The support of a scattering diagram is the union of the supports of its walls.
\end{definition}
For simplicity, we will impose the additional condition that no two walls in the scattering diagram have the same support.

Given a wall $(\fd, f_{\fd})$ and a direction $v\in M$ transversal to $\fd$, we associate the element $\fp_{v,\fd}\in
{\mathrm{Aut}}_{\ZZ-alg}\left(\widehat{\ZZ[P]}\right)$ defined by
\[
  \fp_{v,\fd} (x^m) := x^m f_{\fd}^{m\cdot n }, 
\]
where $n\in N$ is the primitive vector annihilating the tangent space to $\fd$ determined by the sign convention $ v\cdot n <0$.  Note
that the only role of the transversal direction $v$ is to fix which of the two
normals $\pm n$ is used in the exponent. 

Let $\fD$ be a scattering diagram.  A path $\gamma: [0,1] \rightarrow M_{\mathbb{R}} \smallsetminus  \{0 \}$ is called \emph{regular with respect to $\fD$} if it is a smooth immersion with endpoints not in the support of $\fD$ which is transverse to each wall of $\fD$ that it crosses. We define the \textit{path-ordered product} $\fp_{\gamma, \fD}$ along such $\gamma$ as follows. For each power $k \geq 1$, let  
\[
  0< t_1 <  t_2 < \cdots < t_s < 1 
\]
be the longest sequence such that $\gamma(t_i)\in\fd_i$ for a wall
$(\fd_i,f_{\fd_i}) \in \fD$ with $f_{\fd_i} \neq 1 \text{ mod } \fm^k$.  In view of the definition of scattering diagrams, such a sequence is finite; we can therefore consider the composition 
\[
  \fp^{(k)}_{\gamma, \fD} :=
  \fp_{\gamma'(t_s),\fd_s} \circ \cdots \circ \fp_{\gamma'(t_1),\fd_1}.
\]
Then we define
\[
  \fp_{\gamma, \fD} := \lim_{k \rightarrow \infty} \fp ^{(k)}_{\gamma, \fD}. 
\]
\begin{definition}
  A scattering diagram is \emph{consistent} if $\fp _{\gamma, \fD}$ depends
  only on the endpoints of $\gamma$ for any path $\gamma$ which is regular 
  with respect to $\fD$.
\end{definition}

\begin{theorem}\cite{KS},\cite{GS}
  \label{th:KS}
  Given any scattering diagram $\fD$, there exists a consistent scattering
  diagram $\fD'$ which contains $\fD$ such that $\fD'\smallsetminus\fD$ only consists
  of rays.
\end{theorem}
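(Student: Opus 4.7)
The plan is to construct $\fD'$ as the union $\bigcup_{k\geq 0}\fD_k$ of an ascending sequence $\fD=\fD_0\subseteq\fD_1\subseteq\cdots$, where $\fD_{k+1}$ is obtained from $\fD_k$ by adjoining finitely many new rays (or multiplying factors into the attached functions of pre-existing walls of the same support) so that $\fD_{k+1}$ is consistent modulo $\fm^{k+2}$. The finiteness built into the definition of a scattering diagram makes each stage a finite problem, since only finitely many walls contribute nontrivially modulo a given power of $\fm$. Because every wall passes through the origin, any two regular paths with common endpoints differ up to homotopy through regular paths by loops encircling the origin, so consistency modulo $\fm^{k+1}$ is equivalent to the single condition $\fp_{\gamma_0,\fD_k}\equiv\mathrm{id}\pmod{\fm^{k+1}}$ for one fixed small loop $\gamma_0$ based off the support and encircling the origin.

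Assume inductively that $\fD_k$ is consistent modulo $\fm^{k+1}$. Then $\fp_{\gamma_0,\fD_k}$ is congruent to the identity modulo $\fm^{k+1}$, and its deviation modulo $\fm^{k+2}$ lies in the kernel $K_{k+1}$ of $\mathrm{Aut}(\widehat{\ZZ[P]}/\fm^{k+2})\to\mathrm{Aut}(\widehat{\ZZ[P]}/\fm^{k+1})$. Because commutators in the tropical vertex group strictly raise $\fm$-order, $K_{k+1}$ is abelian, and via the $1+D\mapsto D$ logarithm it is identified with the degree-$(k+1)$ piece of the tropical vertex Lie algebra of log-derivations. In rank 2 this piece decomposes as $\bigoplus_{m}\ZZ\cdot x^{m}\partial_{n_{w(m)}}$, summed over lattice points $m\in P$ of $\fm$-order $k+1$, where $w(m):=m/\gcd(m)$ is the primitive direction of $m$ and $n_{w(m)}\in N$ is a primitive annihilator of $m$ (unique up to sign precisely because the rank is two). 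A single ray $(\RR_{\leq 0}w,\,1+c\,x^{jw})$ with primitive $w$ contributes, at leading order, the elementary automorphism $\exp\bigl(c\cdot x^{jw}\partial_{n_w}\bigr)$. Writing the defect as $\exp\!\bigl(\sum_m c_m\cdot x^{m}\partial_{n_{w(m)}}\bigr)$ and adjoining the ray $(\RR_{\leq 0}w(m),\,1-c_m\,x^{m})$ for each nonzero $c_m$ produces $\fD_{k+1}$ with $\fp_{\gamma_0,\fD_{k+1}}\equiv\mathrm{id}\pmod{\fm^{k+2}}$; only finitely many $c_m$ are nonzero by the finiteness of the walls contributing at this order.

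The main obstacle is justifying that the defect actually lies in the log-derivation Lie algebra, rather than in some larger subgroup of $K_{k+1}$. This is the rank 2 incarnation of the Kontsevich--Soibelman structure lemma: every individual wall-crossing $\fp_{v,\fd}$ is by definition the exponential of the log-derivation $\log f_\fd\cdot\partial_n$, so the path-ordered product $\fp_{\gamma_0,\fD_k}$ is a finite composition of such exponentials, and by the Baker--Campbell--Hausdorff formula applied in the pronilpotent tropical vertex Lie algebra modulo $\fm^{k+2}$, this composition is itself the exponential of a log-derivation. With the induction completed, $\fD':=\bigcup_k\fD_k$ (after combining walls of equal support by multiplying attached functions) is a scattering diagram—finiteness at each order is inherited from the inductive construction—that is consistent modulo every $\fm^{k+1}$ and hence consistent, contains $\fD$, and whose additional walls are all rays.
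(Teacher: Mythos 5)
This theorem is not proved in the paper at all; it is imported verbatim from \cite{KS} and \cite{GS}. Your argument is the standard order-by-order Kontsevich--Soibelman construction used in those references (and, in this two-dimensional integral setting, in \cite{GP} and \cite{GHKK}): reduce consistency to a single loop around the origin, observe that the order-$(k{+}1)$ defect is central and, by Baker--Campbell--Hausdorff in the tropical vertex Lie algebra, a sum of elementary log-derivations $x^m\partial_{n_{w(m)}}$, and cancel each summand with a new ray; this is correct.
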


We now associate a consistent scattering diagram $\fD_{(b,c)}$ to 
$\cA(b,c)$.  Following \cite[Example~1.30]{GHKK}, we take $\sigma$ to be the second quadrant, i.e., the cone generated by $(-1,0)$ and
$(0,1)$. Define the ``initial'' scattering diagram associated $\cA(b,c)$ as
\[
  \fD_{\mathrm{in},(b,c)} := 
  \left\{
    \big( \RR (-1,0), 1+x_1^{-b}\big), 
    \big( \RR (0,1), 1+x_2^c\big) 
  \right\}.
\]
We then let $\fD_{(b,c)}$ denote the consistent scattering diagram obtained by
applying Theorem \ref{th:KS} to $\fD_{\mathrm{in},(b,c)}$.  The case of $\fD_{(2,1)}$ is illustrated in \cref{fig:diagex}.

\begin{figure}
  \centering
  \begin{tikzpicture}
    \draw (3,0) -- (-3,0) node[left] {$1+x_1^{-2}$};
    \draw (0,-3) -- (0,3) node[above] {$1+x_2$};
    \draw (0,0) -- (3,-3) node[below right] {$1+x_1^{-2}x_2^2$};
    \draw (0,0) -- (3,-1.5) node[below right] {$1+x_1^{-2}x_2$};
  \end{tikzpicture}
  \caption{The scattering diagram $\fD_{(2,1)}$.  Since the initial diagram $\fD_{\mathrm{in},(2,1)}$ consists of the walls $\big(\RR(-1,0),\allowbreak 1+x_1^{-2}\big)$ and $\big(\RR (0,1), 1+x_2\big)$,
  the associated consistent scattering diagram $\fD_{(2,1)}$ contains $\fD_{\mathrm{in},(2,1)}$ together with
  the two walls $\big(\RR_{\leq 0} (-1,1), 1+x_1^{-2}x_2^2 \big)$ and $\big( \RR_{\leq 0} (-2,1), 1+x_1^{-2}x_2 \big)$.
} 
  \label{fig:diagex}
\end{figure}

While this example portrays a scattering diagram with finitely many rays, the diagram $\fD_{(b,c)}$ will consist of an infinite number of rays precisely when $bc\ge 4$. A detailed description of the rays
which appear for $bc\ge4$ can be found in \cite[Example 1.30]{GHKK}.
We summarize the crucial points here.

First of all note that, in view of the definition of scattering diagrams, all the
rays in  $\fD_{(b,c)} \smallsetminus \fD_{\mathrm{in},(b,c)}$ are contained in the
fourth quadrant.  To make our next observation we need to extend the action of
linear operators on $M_\RR$ to an action on pairs $(\fd,f_\fd)$. If $S$ is
linear on $M_\RR$, set
\begin{equation}
  \label{eqn:linear action}
  S(\fd,f_\fd(x^w))
  :=
  \left( S(\fd), f_\fd\left(x^{S(w)}\right) \right).
\end{equation}
Note that, even if $(\fd,f_\fd)$ is a wall, $S(\fd,f_\fd)$ needs not be a wall
since $S(w)$ may lie outside of the cone $\sigma$ (in which case we also get
that $f_\fd\left(x^{S(w)}\right)$ is not an element of $\widehat{\ZZ [P_\sigma]}$, it will actually be contained in $\widehat{\ZZ [P_{S(\sigma)}]}$).

Now consider the two linear involutions $S_1$ and $S_2$ given by
\[
  S_1 =  
  \begin{pmatrix}
    -1 & -b \\
    0& 1
  \end{pmatrix}
  \quad
  \mbox{and}
  \quad
  S_2 =  
  \begin{pmatrix}
    1 & 0 \\
    -c & -1
  \end{pmatrix}.
\]
If $(\fd, f_{\fd}) \in \fD_{(b,c)} \smallsetminus \fD_{\mathrm{in},(b,c)}$ and $S_i
(\fd)$ is contained strictly in the fourth quadrant, then $S_i(\fd, f_{\fd}) \in
\fD_{(b,c)} \smallsetminus \fD_{\mathrm{in},(b,c)}$. Moreover, both 
\begin{equation}
  \label{eq:first-walls}
  S_2\big(\RR_{\leq 0}(-1,0),1+x_1^{-b}\big)
  \quad
  \mbox{and}
  \quad
  S_1\big(\RR_{\leq 0}(0,1),1+x_2^c\big)
\end{equation}
are walls in $\fD_{(b,c)} \smallsetminus \fD_{\mathrm{in},(b,c)}$ even though
neither 
\[
  \big(\RR_{\leq 0}(-1,0),1+x_1^{-b}\big)
  \quad
  \mbox{nor}
  \quad
  \big(\RR_{\leq 0}(0,1),1+x_2^c\big)
\]
is a wall in $\fD_{(b,c)}$. Using \cite[Section 4]{GP} with a change of basis, these considerations gives us a recipe to produce
elements of $\fD_{(b,c)} \smallsetminus \fD_{\mathrm{in},(b,c)}$: it is enough to apply
alternatively $S_1$ and $S_2$ to the walls (\ref{eq:first-walls}).

We need to distinguish three cases. If $bc<4$, this procedure will construct, in
finitely many steps, all the walls in $\fD_{(b,c)} \smallsetminus
\fD_{\mathrm{in},(b,c)}$.  If $bc\ge4$, we will get two infinite
families of walls whose supports will converge respectively to the rays
spanned by the vectors
\[
  \left(2b,-bc + \sqrt{bc(bc-4)}\right)
  \quad
  \mbox{and}
  \quad
  \left(2b,-bc - \sqrt{bc(bc-4)}\right).
\]
These will exhaust all the walls in $\fD_{(b,c)} \smallsetminus \fD_{\mathrm{in},(b,c)}$
with support lying outside the convex cone spanned by these vectors.  When $bc=4$, this cone will be a single rational ray in $\fD_{(b,c)}$.  For $bc>4$, the structure of the remaining part of 
$\fD_{(b,c)}$ is not completely understood; the expectation is that there is
a wall for each possible rational slope inside this irrational cone, partial evidence for this is displayed in Figure~\ref{diagram32} for the case $(b,c)=(3,2)$. 

On the other hand, the chamber structure (i.e. the collection of cones in which the rays cut the plane) one sees outside of the irrational cone
is very well-behaved and familiar in the theory of cluster algebras.  This chamber structure coincides with the Fock-Goncharov cluster complex, see e.g.
\cite[Section 2]{GHKK}, the mutation fan of Reading \cite{R}, and the picture group of Igusa-Orr-Todorov-Weyman \cite{IOTW}.

\begin{figure}
  \input{scatter32_depth100}
  \caption{The supports of all the walls $(\fd,f_\fd)$ in $\fD_{(3,2)}$ such
    that $f_\fd \neq 1 \bmod \fm^{100}$; the boundary rays of
    the irrational cone are highlighted.} 
  \label{diagram32}
\end{figure}

The next result explains how to obtain Laurent polynomials out of scattering diagrams
and serves as the motivation for our later connections to cluster algebras.
\begin{theorem} 
  \label{univLaurent} 
  Let $\fD:=\fD_{(b,c)}$ be as constructed above and consider a Laurent polynomial $f \in \ZZ[M]$. For any path $\gamma$ which is regular with respect to $\fD$, $\fp_{\gamma,\fD}(f)$ can be viewed as an element of
  $\ZZ[[x_1^{-1},x_2]]$ localized at $x_1^{-1}x_2$.  If for any such $\gamma$ in
  $M_{\RR}$, with starting point in the first quadrant and endpoint in one of the
  chambers of $\fD$, we have that $\fp_{\gamma,\fD}(f)$ lies in $\ZZ[M]$, then $f$ is a universal
  Laurent polynomial.
\end{theorem}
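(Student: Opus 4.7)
The plan is to interpret the path-ordered product $\fp_{\gamma,\fD}$ as the coordinate-change automorphism between the initial cluster and the cluster associated to the endpoint chamber of $\gamma$, and to use this interpretation to recognize $f$ as a universal Laurent polynomial.

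The first step is to show that for each cluster $(x_k, x_{k+1})$ of $\cA(b,c)$ there exists a chamber $C_k$ of $\fD_{(b,c)}$ encoding it, with the first quadrant encoding the initial cluster $(x_1, x_2)$. This is the content of the identification, noted in the preceding discussion, between the chamber structure of $\fD_{(b,c)}$ (outside the irrational cone, when $bc \geq 4$) and the Fock--Goncharov cluster complex of $\cA(b,c)$. In the finite and affine cases ($bc \leq 4$) the correspondence is immediate from the explicit description of the walls; in the wild case it follows by tracking the iterated action of the involutions $S_1, S_2$ on both the walls of the diagram and the sequence of mutations defining the clusters.

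The second step is to show that each individual wall crossing implements a cluster mutation. A wall $(\fd, f_\fd)$ separating two adjacent chambers carries a function of the form $1 + x^w$ with $w$ the g-vector (in the initial cluster) of a cluster variable, and a direct computation confirms that the wall-crossing automorphism $\fp_{v,\fd}(x^m) = x^m f_\fd^{m\cdot n}$ coincides with the substitution relating the Laurent expansions of elements of $\cA(b,c)$ in the two adjacent clusters, cf.\ \cite[Section~4]{GP}. Composing along a regular path $\gamma$ from the first quadrant to $C_k$ therefore realizes $\fp_{\gamma,\fD}(f)$ as the Laurent expansion of $f$ in the coordinates $(x_k, x_{k+1})$, under the natural identification of $M$ with the g-vector lattice of that cluster.

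Combining the two steps, the hypothesis that $\fp_{\gamma,\fD}(f)$ lies in $\ZZ[M]$ for every $\gamma$ ending in a chamber translates into $f \in \ZZ[x_k^{\pm1}, x_{k+1}^{\pm1}]$ for every $k \in \ZZ$, which is exactly the universal Laurent polynomial condition. The main obstacle is matching wall crossings with mutations in the second step; while conceptually standard, making this precise requires carefully tracking the combinatorial choices of transversal direction and normal vector on the scattering-diagram side against the sign conventions used in the mutation formulas of $\cA(b,c)$, and in the wild case also keeping the recursive construction of walls outside the irrational cone in lock-step with the sequence of clusters produced by iterated mutation.
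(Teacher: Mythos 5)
Your proposal is correct and follows essentially the same route as the paper, which simply packages your two steps (chambers of $\fD_{(b,c)}$ correspond to clusters, and the path-ordered product from the positive chamber to a chamber computes the Laurent expansion of $f$ in the corresponding cluster) into a single citation of \cite[Theorem~4.4]{GHKK}, the isomorphism between the cluster variety glued by mutations and the variety glued from the chamber tori via the maps $\fp_{\gamma,\fD}$, restricting to the identity on the initial torus. The only small imprecision is that the exponents $w$ appearing in the wall functions are multiples of the primitive wall directions ($c$-vector data) rather than $\bfg$-vectors of cluster variables, a convention point your own closing caveat already anticipates.
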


\begin{proof}
  This is \cite[Theorem 4.4]{GHKK} applied to the case at hand.
  Specifically, let $\cA$ be the cluster variety defined by the given choice of
  seed. By definition, $\cA$ is obtained by gluing together a collection of tori
  via cluster transformations and thus a regular function on $\cA$ is precisely a
  universal Laurent polynomial.  On the other hand, in
  \cite[Section 4]{GHKK} another variety $\cA'$ is defined. This is done by
  associating a torus $\cA_{\tau}':= \Spec \ZZ[M]$ to a chamber $\tau\subseteq
  M_{\RR}$ of $\fD$. For any two chambers $\tau,\tau'$ we can glue $\cA_{\tau}'$
  to $\cA_{\tau'}'$ using the rational map defined on function fields by
  $\fp_{\gamma,\fD}:\ZZ(x_1,x_2)\rightarrow \ZZ(x_1,x_2)$, where $\gamma$
  is a path beginning in $\tau'$ and ending in $\tau$. Performing these gluings
  gives $\cA'$.

  Now \cite[Theorem 4.4]{GHKK} gives an explicit isomorphism between $\cA$ and
  $\cA'$, and thus the algebra of regular functions on $\cA$ and $\cA'$ are
  isomorphic. Furthermore, this isomorphism restricts to the identity on the
  torus of $\cA$ corresponding to the initial seed and the torus of $\cA'$
  corresponding to the positive chamber. In particular, a function $f$ on this
  torus extends to a function on $\cA'$ if $\fp_{\gamma,\fD}(f)$ lies in
  $\ZZ[M]$ for any path $\gamma$ from the positive chamber to any other
  chamber. This shows the characterization of universal Laurent polynomials.
\end{proof}

We now recall the notion of broken lines, which are tropical analogues of holomorphic disks.  They were introduced in \cite{G10}, their theory was further developed in \cite{CPS}, and they were
used in \cite{GHK11} and \cite{GHKK} to construct canonical bases in various
circumstances.

\begin{definition} 
  \label{brokendef}
  Let $\fD$ be a scattering diagram, $m \in M \smallsetminus \{0\}$, and $q \in
  M_{\RR} \smallsetminus \text{Supp}(\fD)$.  A \emph{broken line} with initial
  \emph{exponent} $m$ and endpoint $q$ is a continuous, piecewise linear path
  $\gamma : ( - \infty , 0] \rightarrow M_{\mathbb{R}} \smallsetminus \{ 0\} $ with
  a finite number of domains of linearity and a choice of monomial
  $c(\ell) x^{m(\ell)} \in \ZZ[M]$ for each domain of linearity $\ell \subseteq ( -
  \infty, 0]$ of $\gamma$. 
  
  The path $\gamma$ and the monomials $c(\ell) x^{m(\ell)}$ need to satisfy the
  following conditions:
  \begin{itemize}
    \item $\gamma(0) = q$;
    \item if $\ell$ is the first (i.e. unbounded) domain of linearity of $\gamma$, then 
      \[c(\ell) x^{m(\ell)} = x^{m};\]
    \item for $t$ in a domain of linearity $\ell$, $\gamma'(t) = -m(\ell)$;
    \item $\gamma$ bends only when it crosses a wall. If $\gamma$ bends from the
      domain of linearity $\ell$ to $ \ell'$ when crossing $(\fd, f_{\fd})$, then
      $c(\ell')x^{m(\ell')}$ is a term in 
      \[\fp_{-m(\ell), \fd} \left(c(\ell) x^{m(\ell)}\right).\]
  \end{itemize}
\end{definition}
We refer to $m(\ell)\in \mathbb{Z}^2$ as the \emph{exponent} of that domain
of linearity.

We are finally ready to introduce the main player of our discussion.
For a broken line $\gamma$ we denote by $\Mono(\gamma)$ the monomial attached to the last domain of linearity of $\gamma$.
\begin{definition}
  Let $\fD, m, q$ be as in Definition~\ref{brokendef}.  Define the \emph{theta function} corresponding to $m$ and $q$
  as
  \[ 
    \vartheta_{q, m} = \sum_{\gamma} \Mono (\gamma), 
  \] 
  where the sum is over all broken lines with initial exponent $m$ and endpoint
  $q$.
\end{definition}

\begin{example} 
  \label{brokenex}
  Consider the scattering diagram $\fD_{(2,2)}$ and let $q$ be a small
  irrational perturbation of the point $(1.5,1)$. There are three broken lines
  with initial exponent $m = (1,-1)$ and endpoint $q$ as shown in
  Figure~\ref{figbrokenex}.
  First of all, we can have a broken line $\gamma_1$ which does not bend.
  Therefore
  \[
    \Mono(\gamma_1) = x_1 x_2^{-1}.
  \]
  There is the broken line $\gamma_2$  which bends only at the $x$-axis. Since
  \[ 
    \fp_{(-1,1), \RR(-1,0)} (x_1 x_2^{-1}) = 
    x_1 x_2^{-1}(1+x_1^{-2}) =  x_1 x_2^{-1} + x_1^{-1} x_2^{-1},
  \]
  to bend we need to choose the second term and obtain 
  \[
    \Mono(\gamma_2) =  x_1^{-1} x_2^{-1}.
  \]
  The last broken line $\gamma_3$ bends both at the $x$- and $y$-axes, the
  latter bend coming from
  \[ 
    \fp_{ (1,1), \RR (0,1)} ( x_1^{-1} x_2^{-1}) =  
    x_1^{-1} x_2^{-1} + x_1^{-1} x_2.  
  \]
  This time we have 
  \[
    \Mono (\gamma_3) = x_1^{-1} x_2.
  \]
  Thus the theta function associated to $m = (1,-1)$ with endpoint point $q$ is 
  \[ 
    \vartheta_{q, (1,-1)} =  
    x_1 x_2^{-1} + x_1^{-1} x_2^{-1} +x_1^{-1} x_2 .  
  \]
\end{example}

\begin{figure}
  \centering
  \begin{tikzpicture}
    \draw (3,0) -- (-3,0) node[left] {$1+x_1^{-2}$};
    \draw (0,-3) -- (0,3) node[above] {$1+x_2^2$};
    \draw (0,0) -- (3,-1.5) node[right] {$1+x_1^{-4}x_2^2$};
    \draw (0,0) -- (1.5,-3) node[below] {$1+x_1^{-2}x_2^4$};
    \draw (1.5,1) node[circle, right] {$q$};
    \draw[dotted] (1.4,-2.8) -- (2.8,-1.4);
    \draw[red] (3,-0.5) -- (1.5,1);
    \draw[red] (2.1,0.1) node[above right]{\small $x_1 x_2^{-1}$};
    \draw[red] (3,-0.5) node[left]{$\gamma_1$};
    \draw[blue] (0.5,0) -- node[left] {\small $x_1^{-1} x_2^{-1}$}  (1.5,1);
    \draw[blue] (0.5,0) -- node[below] {\small $x_1 x_2^{-1}$} (3,-2.5);
    \draw[blue] (3,-2.5) node[left]{$\gamma_2$};
    \draw[green!75!black] (0,2.5)  -- node[right] {\small $x_1^{-1}x_2$} (1.5,1);
    \draw[green!75!black] (-2.5,0) -- node[left] {\small $x_1^{-1} x_2^{-1}$}  (0,2.5);
    \draw[green!75!black] (0.5, -3)  --node[left] {\small $x_1 x_2^{-1}$} (-2.5,0);
    \draw[green!75!black] (0.5,-3) node[above]{$\gamma_3$};
    \draw[color=blue,fill=blue] (0.5,0) circle (0.5mm);
    \draw[color=green!75!black,fill=green!75!black] (0,2.5) circle (0.5mm);
    \draw[color=green!75!black,fill=green!75!black] (-2.5,0) circle (0.5mm);
    \draw[color=black,fill=black] (1.5,1) circle (0.5mm);
  \end{tikzpicture}
  \caption{The scattering diagram $\fD_{(2,2)}$ and the broken lines described in Example~\ref{brokenex}.} 
  \label{figbrokenex}
\end{figure}

The following summarizes the main properties of the theta functions as shown in
\cite{CPS} and \cite{GHKK}.
\begin{theorem} $ $
  \begin{enumerate}
    \item
      If $\fD$ is any consistent scattering diagram, $q$ and $q'$ are two general
      irrational points on $M_{\RR} \smallsetminus$ Supp$(\fD)$, and $\gamma$ is a
      path joining $q$ to $q'$, then $\fp_{\gamma, \fD }(\vartheta_{q,m}) = \vartheta_{q', m}$. 
    \item Take $\fD=\fD_{(b,c)}$. 
      \begin{enumerate}
        \item If $q$ and $m$ lie in the interior of the same chamber of $\fD$, then $\vartheta_{q,m}=x^{m}$.
        \item If $q$ lies in the interior of a chamber of $\fD$, then $\vartheta_{q,m}$ is a Laurent polynomial for any $m$.
        \item If $q$ lies in the interior of the first quadrant, then $\vartheta_{q,m}$ is a universal Laurent polynomial for any $m$.
      \end{enumerate}
  \end{enumerate}
\end{theorem}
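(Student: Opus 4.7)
The plan is to prove the four parts in order, with (1) providing the core machinery that each subsequent part reduces to.

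For (1), the consistency of $\fD$ lets me freely deform $\gamma$ as long as the endpoints are fixed, so I would reduce to the case where $\gamma$ crosses only a single wall $(\fd,f_\fd)$. In this local setting I would extend each broken line $\gamma_0$ ending at $q$ with terminal monomial $c(\ell)x^{m(\ell)}$ by continuing along $\gamma$ through $(\fd,f_\fd)$; at the crossing the extension may either fail to bend (contributing the constant-in-$(f_\fd-1)$ term $c(\ell)x^{m(\ell)}$) or bend into any one of the nonconstant terms of $\fp_{-m(\ell),\fd}(c(\ell)x^{m(\ell)})$. Summing over these choices recovers the full wall-crossing automorphism applied to $c(\ell)x^{m(\ell)}$, and summing further over all broken lines $\gamma_0$ ending at $q$ yields $\fp_{\gamma,\fD}(\vartheta_{q,m})=\vartheta_{q',m}$.

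For (2)(a), the trivial broken line whose image is the ray $\{q+tm: t\le 0\}$ and whose sole monomial is $x^m$ contributes $x^m$ to $\vartheta_{q,m}$; note that since chambers of $\fD_{(b,c)}$ are convex cones emanating from the origin, $q,m \in \tau$ implies this entire ray lies in $\tau$ and so never crosses a wall. To conclude I need to show no other broken line contributes. The geometric obstruction is that bending at a wall $(\fd,f_\fd)$ shifts the exponent from $m(\ell)$ to $m(\ell)+jw$ for some $j\ge 1$, where $w$ is primitive along $\fd$, while the subsequent trajectory direction is determined by this new exponent. Using the explicit description of the rays of $\fD_{(b,c)}$ and the sign convention $v\cdot n<0$ in the definition of $\fp_{v,\fd}$, I would argue inductively on the number of bends that a broken line which bends at least once cannot terminate in the chamber containing its initial exponent.

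Parts (2)(b) and (2)(c) then follow quickly. For (b), I would pick $q'$ in the interior of the chamber containing $m$; by (a), $\vartheta_{q',m}=x^m$, and by (1), $\vartheta_{q,m}=\fp_{\gamma,\fD}(x^m)$ for any regular path $\gamma$ from $q'$ to $q$. Since the cluster-complex chambers of $\fD_{(b,c)}$ outside the irrational cone are locally finite, one can route $\gamma$ to cross only finitely many walls, making the resulting expression a finite composition of wall-crossing automorphisms applied to a single monomial and hence a Laurent polynomial. For (c), I would verify the hypothesis of Theorem~\ref{univLaurent}: for any regular path $\gamma$ from $q$ in the first quadrant to any other chamber, $\fp_{\gamma,\fD}(\vartheta_{q,m})=\vartheta_{q',m}$ by (1), and this is a Laurent polynomial by (b). I expect the main obstacle to be (2)(a), specifically the sign-tracking across wall crossings needed to certify that a nontrivial broken line cannot revisit its initial chamber; the finite-path routing in (b) also requires some care when $bc>4$ because one must navigate around the irrational cone.
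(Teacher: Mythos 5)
Your plan for (2)(b) has a genuine gap: the reduction ``pick $q'$ in the interior of the chamber containing $m$, apply (2a), then transport by (1)'' only makes sense when $m$ actually lies in (the closure of) a chamber of $\fD_{(b,c)}$. When $bc\ge 4$ the chambers do not cover $M_\RR$: there is the irrational cone bounded by the rays through $\bigl(2b,\,-bc\pm\sqrt{bc(bc-4)}\bigr)$, and any $m$ in its interior (or on a limiting ray, e.g.\ $m=(1,-1)$ for $\fD_{(2,2)}$, which the paper notes lies in no chamber) is outside the scope of your argument. These are exactly the $\bfg$-vectors of the non-cluster-monomial theta functions, i.e.\ the interesting cases of the theorem; for such $m$ the sum over broken lines is a priori only a formal power series, and finiteness is not automatic. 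The paper closes this gap with input you do not have: it defines $\Theta\subseteq M$ as the set of $m$ with $\vartheta_{q,m}$ a Laurent polynomial for $q$ in the first quadrant, quotes \cite[Theorem~7.16]{GHKK} to get that $\Theta$ contains all chambers (hence the first three quadrants) \emph{and is closed under addition}, and concludes $\Theta=M$; polynomiality at other basepoints $q$ then follows from \cite[Proposition~7.1]{GHKK}. You need either this additivity statement or some substitute argument bounding the number of contributing broken lines for $m$ in the irrational cone. Since your (2)(c) is deduced from (2)(b) via Theorem~\ref{univLaurent} (exactly as in the paper), it inherits the same gap.

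Two smaller points. For (1) your single-wall-crossing sketch is the right heuristic, but the paper simply cites \cite{CPS} and \cite[Theorem~3.5]{GHKK}; a complete proof must handle the deformation of the endpoint past a wall (broken lines ending at $q'$ are not all obtained by extending broken lines ending at $q$ --- one must track how the set of broken lines changes as the endpoint moves, which is the actual content of the CPS argument), and this is where the ``general irrational point'' hypothesis enters. For (2)(a) you propose a direct sign-tracking induction valid in every chamber; the paper instead proves it only for the positive chamber (\cite[Proposition~3.8]{GHKK}) and transports to other chambers via the mutation maps $T_v$ of \cite[Construction~1.38]{GHKK} and the broken-line correspondence of \cite[Proposition~3.6]{GHKK}. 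Your direct route is plausible but substantially harder than sketched for non-positive chambers, where the wall exponents are not monotone with respect to the chamber; the mutation trick is what makes this painless.
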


\begin{proof}
  (1) is a main result of \cite{CPS}, see also \cite[Theorem~3.5]{GHKK} for its
application to scattering diagrams in the current context. (2a) is 
\cite[Proposition~3.8]{GHKK} if $q$ and $m$ are both in the positive quadrant
of $M_{\RR}$. If $q$ and $m$ are in some other chamber, say $\sigma$, then
by \cite[Construction~1.38]{GHKK}, there is a scattering diagram $\fD'$
obtained from a mutation of the initial seed defining $\fD$ and a piecewise
linear map $T_v:M_{\RR}\rightarrow M_{\RR}$ which takes the support of
$\fD$ to the support of $\fD'$, and such that the positive chamber of $\fD'$
pulls back to $\sigma$. Furthermore, there is a one-to-one correspondence
between broken lines for $\fD$ and $\fD'$ by \cite[Proposition~3.6]{GHKK}.
Thus the claim follows from \cite[Proposition~3.8]{GHKK} applied to $\fD'$.

(2b) is \cite[Example~7.18]{GHKK}. In slightly more detail, let 
$\Theta\subseteq M$ denote the
set of $m\in M$ for which $\vartheta_{q,m}$ is a Laurent polynomial for
$q$ general in the first quadrant of $M_{\RR}$. By 
\cite[Theorem~7.16,(3)]{GHKK}, $\Theta$ contains all points of $M$
contained in chambers (i.e., the set of points denoted as $\Delta^+_V(\ZZ)$
in \cite[Theorem~7.16,(3)]{GHKK}). Thus in particular, $\Theta$ contains
all integral points in the first three quadrants of $M_{\RR}$. But
by \cite[Theorem~7.16,(4)]{GHKK}, $\Theta$ is closed under addition, and
hence consists of all points in $M$. It then follows that 
$\vartheta_{q,m}$ is a Laurent polynomial for $q$ in any chamber by
\cite[Proposition~7.1]{GHKK}.

Finally, (2c) follows from from (2b) and Theorem \ref{univLaurent}.
\end{proof}

\begin{remark} 
  \label{rk:theta functions g-vector}
  If $m\in M$ lies in one of the chambers of $\fD_{(b,c)}$ and $q$ lies in
  the first quadrant, then from (1) and (2a) above we see that
  $\vartheta_{q,m}=\fp_{\gamma,\fD_{(b,c)}}(x^{m})$ for a path $\gamma$
  joining the chamber containing $m$ to $q$. Moreover, it follows from the
  details of the proof of Theorem \ref{univLaurent} that $\vartheta_{q,m}$ is
  a cluster monomial and then from \cite[Theorem~7.5]{GHKK} that the 
  \emph{$\bfg$-vector} of this cluster monomial is precisely $m$. We again refer to
  \cite{FZ4} for the definitions and basic properties of $\bfg$-vectors.
\end{remark}

\begin{example}
  Let us try one more calculation with broken lines. We take the same scattering
  diagram as in Example~\ref{brokenex}. Now take the initial exponent $m=(2,-2)$
  with the same endpoint $q$. By similar calculations we get 
  \[ 
    \vartheta _{q, (2,-2)} = 
    x_1^2 x_2^{-2} + x_1^{-2}x_2^2 + x_1^{-2}x_2^{-2} + 2 x_2^{-2} + 2x_1^{-2}.  
  \]
  Note that 
  \[ 
    \vartheta _{q, (2,-2)} = 
    \left(\vartheta_{q, (1,-1)}\right) ^2 -2. 
  \]
  In the scattering diagram $\fD_{(2,2)}$ considered here, the ray with exponent $(1,-1)$ does
  not lie in the interior of any chamber. So neither $\vartheta_{q,(1,-1)}$ nor 
  $\vartheta _{q, (2,-2)}$ is a cluster monomial.

  There are a number of known bases for $\cA(2,2)$ (see \cite{Dup,MSW,LLZ}) which all prescribe different elements
  having $\bfg$-vectors $(d,-d)$ for $d>0$. The
  calculations above show that at least for $d=1$ or $2$, theta functions agree with
  the greedy basis elements.
\end{example}

\section{From $\bfg$-vectors to $\bfd$-vectors}
As mentioned in Remark \ref{rk:theta functions g-vector}, theta functions are
parametrized by their $\bfg$-vectors. On the other hand the description of
greedy elements given in \cite{LLZ} is in terms of their $\bfd$-vectors (cf.
Remark 1.9 ibid.). 

In order to compare the two we will leverage the observation that, in rank 2,
these families of vectors are related by an easy piecewise-linear transformation
as explained in the paragraph following Conjecture 3.21 in \cite{RS}. We will do
so via a scattering diagram $\fD^{\bfd}_{(b,c)}$ closely related to
$\fD_{(b,c)}$.

Let $T:M_\RR\rightarrow M_\RR$ be the piecewise-linear map given by
\[
  T (m) := 
  \begin{cases}
    m   & m_2 \geq 0 \\
    m + (bm_2,0), & m_2 \leq 0.
  \end{cases}
\]
We will denote its domains of linearity by 
\[ 
  \mathcal{H}_{+} := 
  \left\{ m \in M_{\mathbb{R}}\, |\, m_2 \geq 0 \right\} 
  \quad
  \mbox{and}
  \quad
  \mathcal{H}_{-} := 
  \left\{ m \in M_{\mathbb{R}} \,|\, m_2 \leq 0 \right\}.
\]
Let $T_+$ and $T_-$ be the linear extensions to $M_\RR$ of $T|_{\mathcal{H}_+}$
and $T|_{\mathcal{H}_-}$ respectively ($T_+$ is just the identity map but it
will be convenient to use this notation in what follows). 
By (\ref{eqn:linear action}), both $T_+$ and $T_-$ act on pairs $(\fd,f_\fd)$ so
we can use them to define the image of such pairs under $T$. Namely set
\[
  T(\fd,f_\fd):=
  \left\{
    T_+\left(\fd\cap\mathcal{H}_+,f_\fd\right),
    T_-\left(\fd\cap\mathcal{H}_-,f_\fd\right)
  \right\}.
\]

Having fixed the notation we are ready to introduce $\fD_{(b,c)}^\bfd$.
The set
\[
  T(\fD_{(b,c)}):=
  \bigcup_{(\fd,f_\fd)\in \fD_{(b,c)}}
  T(\fd,f_\fd)
\]
is not a scattering diagram according to Definition 
\ref{def:scattering_diagram} (not all of its elements are walls for the same
convex cone), but can be made into one by a few simple fixes.

First of all, $\big( \RR (0,1), 1+x_2^c\big)$ is the only wall of $\fD_{(b,c)}$
whose support is not totally contained in one of the domains of linearity of
$T$; therefore, under $T$, it breaks into two parts:
\[
  \big( \RR_{\ge0} (0,1), 1+x_2^c\big)
  \quad
  \mbox{and}  
  \quad
  \big( \RR_{\le 0} (b,1), 1+x_1^{bc}x_2^c\big).
\]

Next note that, since $T(-1,0)=(-1,0)$ and $T(b,-1)=(0,-1)$, $T$ maps all the
walls of $\fD_{(b,c)}\smallsetminus\fD_{\mathrm{in},(b,c)}$ to the third quadrant.
Indeed, $\big(\RR_{\leq0}(-b,1),1+x_1^{-bc}x_2^c\big)$ is the wall with the biggest slope in
$\fD_{(b,c)}\smallsetminus\fD_{\mathrm{in},(b,c)}$ and its image is $\big( \RR_{\le0} (0,1), 1+x_2^c\big)$.

\begin{definition}
  $\fD_{(b,c)}^\bfd$ is the scattering diagram obtained from
  $T\left(\fD_{(b,c)}\right)$ by replacing 
  \begin{itemize}
    \item
      $\big(\RR (-1,0), 1+x_1^{-b}\big)$ with $\big(\RR  (1,0), 1+x_1^b\big)$,
    \item 
      both $ \big( \RR_{\ge0} (0,1), 1+x_2^c\big)$ and $\big( \RR_{\le0} (0,1),
      1+x_2^c\big)$ with $ \big( \RR (0,1), 1+x_2^c\big)$.
  \end{itemize}
  Its base region is the cone $\sigma^\bfd$ generated by $(1,0)$ and $(0,1)$. 
\end{definition}

\begin{remark}
  It is not too hard to see that the scattering diagram $\fD_{(b,c)}^\bfd$ is
  consistent. This fact, together with the uniqueness property implied by
  \cite[Theorem 1.7]{GHKK}, gives an alternative way to introduce it. Indeed, in
  analogy with the definition of $\fD_{(b,c)}$, one could consider the
  scattering diagram
  $\fD_{\mathrm{in},(b,c)}^\bfd$ given by 
  \[
    \fD_{\mathrm{in},(b,c)}^\bfd=
    \left\{
      \big(\RR (1,0), 1+x_1^b\big), 
      \big(\RR (0,1), 1+x_2^c\big)
    \right\}
  \]
  and obtain $\fD_{(b,c)}^\bfd$ using Theorem \ref{th:KS}.  The case of $\fD_{(2,1)}^\bfd$ is illustrated in Figure~\ref{fig:diagex2}.
\end{remark}

\begin{figure}[h]
  \centering
  \begin{tikzpicture}
    \draw (-3,0) -- (3,0) node[right] {$1+x_1^2$};
    \draw (0,-3) -- (0,3) node[above] {$1+x_2$};
    \draw (0,0) -- (-3,-3) node[below left] {$1+x_1^2x_2^2$};
    \draw (0,0) -- (-3,-1.5) node[below left] {$1+x_1^2x_2^1$};
  \end{tikzpicture}
  \caption{The scattering diagram $\fD_{(2,1)}^\bfd$.}
  \label{fig:diagex2}
\end{figure}

For a broken line $\gamma$ in $\fD_{(b,c)}$, we denote its image under $T$ as
$T(\gamma)$: this is the broken line in $\fD_{(b,c)}^\bfd$ whose underlying map
is $T\circ\gamma$. Given any domain of linearity $\ell$ of $\gamma$, by
subdividing it when necessary, we can always assume that either $\gamma(\ell)
\subset \mathcal{H}_{+} $ or $\gamma(\ell)\subset \mathcal{H}_{-}$. The monomial
attached to $\ell$ in $T(\gamma)$ is then obtained by applying, accordingly, either
$T_+$ or $T_-$ to the exponent of the monomial attached to $\ell$ in $\gamma$.

\begin{theorem}
  \label{theorem:T_on_broken_lines}
  The map $T$ defines a one-to-one correspondence from broken lines in $\fD_{(b,c)}$
  with exponent $m$ and endpoint $q$ to broken lines in $\fD_{(b,c)}^\bfd$ with exponent $T(m)$
  and endpoint $T(q)$. In particular, for $q\in\mathcal{H}_+$ or $q\in\mathcal{H}_-$,
  we have
  \[ 
    \vartheta^\bfd_{T(q),T(m)}=T_{+} \left(\vartheta_{q,m}\right) 
    \quad
    \mbox{or} 
    \quad
    \vartheta^\bfd_{T(q),T(m)}=T_{-}\left(\vartheta_{q,m}\right)
  \]
  respectively.
\end{theorem}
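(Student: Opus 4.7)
The plan is to construct the bijection explicitly: for a broken line $\gamma$ in $\fD_{(b,c)}$, the image $T(\gamma)$ has underlying path $T\circ\gamma$ and, on each domain of linearity $\ell$ with $\gamma(\ell)\subseteq \mathcal{H}_+$ or $\mathcal{H}_-$ (subdividing when needed), the monomial $c(\ell)x^{m(\ell)}$ is replaced by $c(\ell)x^{T_+(m(\ell))}$ or $c(\ell)x^{T_-(m(\ell))}$ respectively. Checking that this is a broken line of $\fD^\bfd_{(b,c)}$ reduces to matching walls and matching wall-crossing terms. The walls correspond as follows: every non-initial wall of $\fD_{(b,c)}$ lies in $\mathcal{H}_-$ and is sent by $T_-$ to a wall of $\fD^\bfd_{(b,c)}$; the $x_1$-axis wall $(\RR(-1,0), 1+x_1^{-b})$ is paired with $(\RR(1,0), 1+x_1^b)$; the upper portion of the $x_2$-axis wall of $\fD_{(b,c)}$ is paired with the upper portion of the $x_2$-axis wall of $\fD^\bfd_{(b,c)}$; the lower portion of the $x_2$-axis wall of $\fD_{(b,c)}$ is sent by $T_-$ to $(\RR_{\leq 0}(b,1), 1+x_1^{bc}x_2^c)$; and the steepest non-initial wall $(\RR_{\leq 0}(-b,1), 1+x_1^{-bc}x_2^c)$ of $\fD_{(b,c)}$ is sent by $T_-$ to the lower portion of the $x_2$-axis wall of $\fD^\bfd_{(b,c)}$.

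For the wall-crossing compatibility, the key input at a wall contained in $\mathcal{H}_-$ is the identity $T_-(m)\cdot n' = m\cdot n$, where $n'= n\, T_-^{-1}$ is the primitive wall-normal in $\fD^\bfd_{(b,c)}$; combined with the linear action of $T_-$ on the wall's direction vector $w$, this yields an immediate term-by-term correspondence between $\fp_{-m, \fd}(c\,x^m)$ and $\fp^\bfd_{-T_-(m), T_-(\fd)}(c\, x^{T_-(m)})$. The upper $x_2$-axis case is trivial because $T_+$ is the identity. The nontrivial case is the $x_1$-axis wall: $T_+$ and $T_-$ both fix the support pointwise, but the wall function flips from $1+x_1^{-b}$ to $1+x_1^b$. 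Direct expansion of the two wall operators, combined with the binomial symmetry $\binom{|m_2|}{k} = \binom{|m_2|}{|m_2|-k}$, shows that the $k$-th term of the former corresponds to the $(|m_2|-k)$-th term of the latter after applying $T_\pm$ to the exponents. As a consequence, the passage of $\gamma$ through the $x_1$-axis always yields a bend of $T(\gamma)$ at the $x_1$-axis wall of $\fD^\bfd_{(b,c)}$, even if $\gamma$ itself passes through unbent (i.e.\ chose the $k=0$ term in $\fD_{(b,c)}$); this reflects the piecewise-linear nature of $T$ itself.

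Once compatibility of wall operators is established, the map $\gamma \mapsto T(\gamma)$ sends broken lines with initial exponent $m$ and endpoint $q$ in $\fD_{(b,c)}$ to broken lines with initial exponent $T(m)$ and endpoint $T(q)$ in $\fD^\bfd_{(b,c)}$; the inverse is given by the analogous construction using $T_\pm^{-1}$ and the reverse wall bijection, establishing the one-to-one correspondence. For the theta function identity, if $q \in \mathcal{H}_+$ then the final domain of linearity of every broken line ending at $q$ lies in $\mathcal{H}_+$, so $\Mono(T(\gamma)) = T_+(\Mono(\gamma))$; summing over broken lines yields $\vartheta^\bfd_{T(q), T(m)} = T_+(\vartheta_{q, m})$, and the case $q \in \mathcal{H}_-$ is symmetric. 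The main obstacle is the $x_1$-axis case, where the wall function changes and term indices must be reflected between the two diagrams; binomial symmetry resolves this, and the rest of the verification is a routine linear-algebraic computation.
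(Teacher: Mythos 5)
Your proposal is correct and takes essentially the same approach as the paper: reduce everything to checking the bend at the $x_1$-axis wall, where the paper verifies the identity $x^{T_-(m)}(1+x_1^b)^{-m_2}=x^{m}(1+x_1^{-b})^{-m_2}$ — the same term-by-term correspondence you obtain via the binomial symmetry $\binom{|m_2|}{k}=\binom{|m_2|}{|m_2|-k}$. Your explicit wall bijection and the observation that an unbent crossing of the $x_1$-axis in $\fD_{(b,c)}$ becomes a bend in $\fD^{\bfd}_{(b,c)}$ are correct refinements of details the paper leaves implicit.
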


\begin{proof}
  This is essentially the same as the argument of \cite[Proposition~3.6]{GHKK}.  To
  prove the statement, we only need to check the bending at the $x$-axis. Let
  $\ell$, $\ell'$ be the domains of linearity of $\gamma$ before and after bending
  along $\RR (-1,0)$. So $c(\ell') x^{m(\ell')}$ is a term in 
  \[
    \fp_{-m(\ell),\RR(-1,0)} \left(c(\ell) x^{m(\ell)}\right)
    = 
    c(\ell) x^{m(\ell)} \left(1+x_1^{-b}\right) ^{|m_2(\ell)|}.
  \]

  First, assume $\gamma$ passes from $\mathcal{H}_-$ to $\mathcal{H}_+$. In this
  case, we have $m_2(\ell) < 0$. Now in order for the monomial
  $c(\ell')x^{T_+(m(\ell'))} =c(\ell')x^{m(\ell')}$ attached to $\ell'$ in
  $T(\gamma)$ to satisfy the bending rule, it must be a term in
  \[
    \fp_{-T_-(m(\ell)),\RR (1,0)} \left(c(\ell) x^{T_-(m(\ell))}\right). 
  \]
  Since the second component of $T_-(m(\ell))$ is $m_2(\ell)$, we get
  \begin{align*} 
    \fp_{-T_-(m(\ell)),\RR (1,0)} \left(c(\ell) x^{T_-(m(\ell))}\right) 
    &=c(\ell) x^{T_-(m(\ell))} \left(1+x_1^b\right) ^{-m_2(\ell)}\\
    &=c(\ell) x^{m(\ell)} x_1^{b m_2(\ell)}\left(1+x_1^b\right)^{-m_2(\ell)}\\
    &=c(\ell) x^{m(\ell)} \left(1+x_1^{-b}\right) ^{-m_2(\ell)}.
  \end{align*}
  This shows that $T(\gamma)$ satisfies the correct rule when bending along
  $(\RR (1,0), 1+x_1^b)$ if $\gamma$ passes from $\mathcal{H}_-$ to
  $\mathcal{H}_+$. By repeating similar calculations, we can see that this also
  holds when $\gamma$ passes from $\mathcal{H}_+$ to $\mathcal{H}_-$.
\end{proof}

The following demonstrates the utility of using $\fD_{(b,c)}^\bfd$.
\begin{proposition}
  For any  $m\in M$, if $q$ lies in the first quadrant, then 
  \[
    \vartheta^\bfd_{q, m}=x^{m}\left(1+f(x_1,x_2)\right)
  \]
  where $f\in (x_1,x_2)\subseteq \Bbbk[x_1,x_2]$.
  In particular, $m$ is the negative of the $\bfd$-vector of
  $\vartheta^\bfd_{q,m}$.
\end{proposition}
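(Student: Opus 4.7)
The plan is to analyze broken lines in $\fD^\bfd_{(b,c)}$ directly, exploiting the key structural feature that every wall of $\fD^\bfd_{(b,c)}$ has its direction vector $w$ lying in $\sigma^\bfd \cap M$. Indeed, the walls are $(\RR(1,0),1+x_1^b)$, $(\RR(0,1),1+x_2^c)$, and walls supported in the third quadrant whose wall functions have the form $1 + \sum_{k \geq 1} c_k x^{kw}$ with $w$ in the open first quadrant. Consequently, every nonconstant monomial appearing in any integer power $f_\fd^{N}$ has exponent a nonzero element of $\sigma^\bfd \cap M$.

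From this, the following monotonicity is immediate. If a broken line $\gamma$ bends from domain of linearity $\ell$ to $\ell'$ across a wall $(\fd, f_\fd)$, then $c(\ell')x^{m(\ell')}$ is a nonconstant term of $c(\ell)x^{m(\ell)}\, f_\fd^{m(\ell)\cdot n}$, so $m(\ell') = m(\ell) + kw$ for some $k \geq 1$, and each bend shifts the current exponent by a nonzero element of $\sigma^\bfd \cap M$. Iterating from the initial exponent $m$, the final exponent of any broken line ending at $q$ takes the form $m + w_\gamma$ with $w_\gamma \in \sigma^\bfd \cap M$, and $w_\gamma = 0$ exactly when $\gamma$ does not bend. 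For generic $q$ in the first quadrant there is a unique nonbending broken line with initial exponent $m$, namely the straight segment $\gamma_0(t) = q - tm$, contributing $x^m$ with coefficient $1$.

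Summing over all broken lines and factoring out $x^m$ then yields $\vartheta^\bfd_{q,m} = x^m(1 + f)$ with $f = \sum_{\gamma \ne \gamma_0} c_\gamma x^{w_\gamma} \in (x_1,x_2) \cdot \ZZ[x_1,x_2]$. That $f$ is a polynomial rather than a formal power series follows from the bijection of \cref{theorem:T_on_broken_lines}, which reduces the finiteness of the sum to the analogous finiteness for $\vartheta_{q',m'}$ in $\fD_{(b,c)}$ already established in Section~3. The $\bfd$-vector claim is then immediate: $x^m$ is the corner monomial of the Newton polygon of $\vartheta^\bfd_{q,m}$ and appears with coefficient $1$, so the denominator exponents are exactly $-m$. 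The only substantive step is the monotonicity observation above; verifying that all third-quadrant walls of $\fD^\bfd_{(b,c)}$ have direction vector in $\sigma^\bfd$ is built into the construction, since the piecewise-linear map $T$ (and the explicit replacements in the definition of $\fD^\bfd_{(b,c)}$) send every wall direction of $\fD_{(b,c)}$ into $\sigma^\bfd$.
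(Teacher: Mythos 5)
Your proposal is correct and follows essentially the same route as the paper: the core observation in both is that every wall function of $\fD^{\bfd}_{(b,c)}$ has the form $1+g$ with $g\in(x_1,x_2)$, so each bend only adds a monomial with nonnegative exponents (not both zero), leaving $x^m$ as the unique corner term contributed by the straight broken line. Your extra remarks on finiteness and on the uniqueness of the nonbending broken line are fine elaborations of points the paper leaves implicit.
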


\begin{proof}
  For any $m\in M$ and any $q$ in the first quadrant, there is always a broken
  line $\gamma$ for $m$ and $q$ that does not bend at any wall. Therefore $\Mono
  (\gamma) = x^{m}$ always appears as a term in $\vartheta^\bfd_{q,m}$.

  However, because the functions attached to the walls of
  $\fD_{(b,c)}^\bfd$ are all of the form $1+g(x_1,x_2)$ with $g(x_1,x_2) \in
  (x_1,x_2) \subseteq \Bbbk[[x_1,x_2]]$, it follows that any term coming from a
  broken line which bends must be of the form $cx^{m}x_1^{d_1}x_2^{d_2}$ with
  $d_1,d_2\ge 0$, $d_1+d_2>0$. This proves the result.
\end{proof}

\begin{remark}
  Combining Theorem \ref{theorem:T_on_broken_lines} with the above result, when $q$
  is in the first quadrant we obtain the parametrization of
  theta functions we were after. Indeed, we get 
  \[
    \vartheta^\bfd_{q,T(m)}=\vartheta_{q,m}
  \]
  with $m$ being its $\bfg$-vector and $T(m)$ the negative of its 
  $\bfd$-vector.
\end{remark}

\section{Proof that the bases coincide}

We may now state the main theorem in our current notation.

\begin{theorem}\label{theorem: main}
For any integers $b,c>0$, for each $m=(m_1,m_2)\in \mathbb{Z}^2$, and for each
generic point $q$ in the first quadrant, we have that 
\[ \vartheta_{q,m}^{\mathbf{d}} = x[-m_1,-m_2]\]
as elements in the cluster algebra $\mathcal{A}(b,c)$.  Hence, the greedy basis and the theta basis for $\mathcal{A}(b,c)$ coincide.
\end{theorem}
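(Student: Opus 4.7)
The strategy is to apply Scholium~\ref{sch} with $(a_1, a_2) := (-m_1, -m_2)$ and $z := \vartheta^{\bfd}_{q, m}$. Three conditions must be verified: that $z$ lies in $\cA(b, c)$, that the monomial $x_1^{-a_1} x_2^{-a_2} = x^m$ appears in $z$ with coefficient $1$, and that the support of $z$ is contained in the half-open quadrilateral $OABC$ of Figure~\ref{fig:greedy polytopes} associated to $(-m_1, -m_2)$. The first is immediate from the theta-function theorem of Section~3 via the identification $\vartheta^{\bfd}_{q, m} = \vartheta_{q, T^{-1}(m)}$ (valid since $q$ lies in the first quadrant and $T$ is a bijection of $M$); the second is the proposition at the end of Section~4, as the straight broken line contributes $x^m$ with coefficient $1$ while every bending raises the exponent by a nonzero nonnegative vector. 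The main content is therefore the support bound.

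The key structural input is that every wall of $\fD^{\bfd}_{(b, c)}$ has defining vector $w \in \sigma^{\bfd} \cap M$, i.e., in the closed first quadrant; hence each bending shifts the exponent of the attached monomial by a nonnegative multiple of such a $w$. The support bound is then established by a case analysis following the six cases of Theorem~\ref{th:Newton polygons}. Case (1), where $m_1, m_2 \geq 0$, is immediate: the line segment from infinity in direction $m$ to $q$ stays in the first quadrant and meets no walls, so $z = x^m$. In Cases (2)--(3), each broken line bends at most once, at the single coordinate axis wall perpendicular to the negative coordinate of $m$, placing the resulting exponents precisely on the segment $BA$ (resp.~$BC$). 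In Cases (4)--(5) and (6), $m$ lies in the third quadrant and broken lines may bend both at axis walls and at the third-quadrant walls of $\fD^{\bfd}_{(b, c)}$, calling for a more delicate analysis.

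I expect the main obstacle to be Case (6) in the imaginary regime $bc \geq 4$, where the third-quadrant walls of $\fD^{\bfd}_{(b, c)}$ accumulate along irrational rays and their global structure is not fully understood. To handle this case I would combine the description of the third-quadrant wall directions as $T_-$-images of those in $\fD_{(b, c)}$ (obtained via the $S_1, S_2$-recursion of Section~3) with the constraint that every broken line must terminate at $q$ in the interior of the first quadrant, which caps the cumulative bending. A crucial simplification is that the \emph{half-open} quadrilateral $OABC$ appearing in Scholium~\ref{sch} is strictly larger than the greedy Newton polygon $R_{a_1, a_2}$ of Theorem~\ref{th:Newton polygons}; this slack means one need only maintain containment in $OABC$, rather than track the precise Newton polygon of $z$. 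Once the support bound is established in all six cases, Scholium~\ref{sch} immediately yields $z = x[-m_1, -m_2]$, completing the proof.
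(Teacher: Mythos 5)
Your setup is the same as the paper's: apply Scholium~\ref{sch} to $z=\vartheta^{\bfd}_{q,m}$, check that $x^m$ occurs with coefficient $1$ (only the unbent broken line contributes, since every bend strictly increases a component of the exponent), and reduce everything to a support bound. The easy cases ($m_1\ge 0$ or $m_2\ge 0$, where $\vartheta^{\bfd}_{q,m}$ is a cluster monomial) are also handled as in the paper. But the central technical content of the theorem is exactly the part you defer: bounding the final exponents of broken lines whose initial exponent $m$ lies in the third quadrant. Saying that termination at $q$ in the first quadrant ``caps the cumulative bending'' is not an argument; a priori a broken line could bend many times among the third-quadrant walls (including the ill-understood walls inside the irrational cone when $bc>4$) and still reach $q$, and you give no mechanism that converts ``must reach $q$'' into the sharp inequalities $m_1\le m_1^q\le(\tfrac{m_1}{m_2}-b)m_2^q$ and $m_2\le m_2^q\le(\tfrac{m_2}{m_1}-c)m_1^q$ that the quadrilateral $OABC$ demands. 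Your suggestion to use the $S_1,S_2$-recursion to describe the third-quadrant walls also cannot work as stated, because that recursion only produces the walls outside the irrational cone; the paper explicitly notes that the structure inside the cone is not completely understood, so any proof relying on an enumeration of those walls is doomed.

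The paper's resolution, which is the one idea your proposal is missing, is a conserved quantity: the \emph{angular momentum} $q_2m_1-q_1m_2$ of a broken line at a point $q$ of linearity with exponent $(m_1,m_2)$. It is constant along the whole broken line because every wall of $\fD^{\bfd}_{(b,c)}$ passes through the origin, so each bend changes the exponent by a multiple of a vector parallel to the position vector at the bend. Its sign decides whether the broken line enters the first quadrant through the fourth or the second quadrant, and (together with the coordinatewise monotonicity of exponents along a broken line and the resulting monotonicity of slopes at bends) it pins down the final exponent to lie in $OABC$, with equality on the boundary only at $A$ or $C$. This argument is uniform in $b,c$ and never needs to know which walls actually occur inside the irrational cone. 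Without this invariant or a substitute for it, your case (4)--(6) analysis is a statement of intent rather than a proof, so the proposal has a genuine gap at its core step.
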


\noindent The proof will be to show that the support of $\vartheta_{q,m}^{\mathbf{d}}$ is contained in the polygon $R_{m_1,m_2}$ in Theorem \ref{th:Newton polygons}.  By Scholium \ref{sch}, this is already enough to show that $\vartheta_{q,m}^\mathbf{d}=x[-m_1,-m_2]$.

We begin our analysis by describing the ``changes of direction'' of a broken
line $\gamma$ in $\mathfrak{D}_{(b,c)}^\mathbf{d}$. 
Let $\ell$ be a domain of
linearity of $\gamma$. We say that $\gamma$ \emph{moves right} (resp. \emph{up}) in
$\ell$ if $m_1(\ell)<0$ (resp. $m_2(\ell)<0$). Conversely we will say that
$\gamma$ \emph{moves left} or \emph{down} in $\ell$. 

\begin{lemma}
  \label{lemma: change of direction}
  Let $\ell$ and $\ell'$ be two consecutive domains of linearity of a broken line
  $\gamma$ in $\mathfrak{D}_{(b,c)}^\mathbf{d}$. Then
  \[
    m_1(\ell) \leq m_1(\ell')
    \qquad
    \mbox{and}
    \qquad
    m_2(\ell) \leq m_2(\ell')
    .
  \]
\end{lemma}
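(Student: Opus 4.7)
The plan is to unwind the bending rule at a single wall crossing. The decisive structural fact is that every wall $(\fd, f_\fd)$ of $\fD_{(b,c)}^\bfd$ has direction vector $w$ lying in the closed first quadrant $\sigma^\bfd$: for the two initial walls this is $w=(1,0)$ or $w=(0,1)$, while for every other wall the support lies in the third quadrant, so $\fd = \RR_{\leq 0} w$ with both coordinates of $w$ strictly positive. Equivalently, this is forced by Definition \ref{walldef} applied to the alternative description of $\fD_{(b,c)}^\bfd$ as the consistent completion of $\fD_{\mathrm{in},(b,c)}^\bfd$, whose base cone is $\sigma^\bfd$.

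Given this, the bending rule of Definition \ref{brokendef} asserts that $c(\ell')\, x^{m(\ell')}$ is a term of
\[
\fp_{-m(\ell),\, \fd}\bigl(c(\ell)\, x^{m(\ell)}\bigr) \;=\; c(\ell)\, x^{m(\ell)}\, f_\fd^{\,m(\ell)\cdot n},
\]
where $n$ is the primitive normal to $\fd$ with $-m(\ell)\cdot n<0$, i.e.\ $m(\ell)\cdot n>0$. Since $f_\fd = 1 + \sum_{k\ge 1} c_k x^{kw}$ is raised to a positive integer power, its expansion is a polynomial in $x^w$ with non-negative powers of $w$, so every monomial appearing on the right has the form $c'\, x^{m(\ell)+jw}$ for some integer $j\ge 0$. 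Thus $m(\ell') = m(\ell) + jw$ with $w_1,w_2 \ge 0$ and $j\ge 0$, which yields the componentwise inequalities $m_i(\ell) \le m_i(\ell')$ for $i=1,2$.

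The only place requiring care is the sign convention in $\fp_{v,\fd}$. With $v=-m(\ell)$ the normal $n$ must be selected so that $m(\ell)\cdot n$ is \emph{positive}; were the opposite convention used, $f_\fd$ would appear raised to a negative integer power and its expansion would be an infinite series in $x^{-w}$, allowing $m(\ell)$ to decrease rather than increase. Tracking the convention from Definition \ref{walldef} carefully is therefore the only real content of the proof, after which the monotonicity statement is immediate.
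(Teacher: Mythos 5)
Your proposal is correct and follows essentially the same route as the paper: the bend multiplies the attached monomial by $f_\fd^{m(\ell)\cdot n}$ with $m(\ell)\cdot n>0$ by the sign convention, and every wall function of $\fD_{(b,c)}^\bfd$ is a series in $x^w$ with $w$ in the closed first quadrant, so the exponent can only increase componentwise. The paper states the latter fact as an immediate consequence of the construction, whereas you verify it wall by wall and flag the sign convention explicitly; this is just a more detailed write-up of the identical argument.
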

\begin{proof}
  Suppose $\gamma$ bends along the wall
  $(\fd, f_{\fd})$ when passing from $\ell$ to $\ell'$ then 
  $c(\ell')x^{m(\ell')}$ is a term in
  \[
    \fp_{-m(\ell), \fd} \left(c(\ell) x^{m(\ell)}\right) 
    =
    c(\ell)x^{m(\ell)}f_{\fd}^{m(\ell)\cdot n}
  \]
  with $m(\ell)\cdot n >0$. The desired property then follows immediately from
  the observation that, by how $\mathfrak{D}_{(b,c)}^\mathbf{d}$ has been
  constructed, all the exponents of the monomials of $f_{\fd}$ are 
  non-negative.
\end{proof}

An immediate consequence of this lemma is that, once a broken line begins to move
left or down, it will continue to do so. In particular, if $\gamma$ is a broken
line ending in the first quadrant, it can move left (resp. down) only
in the first and fourth (resp. second) quadrant.

At any point $q=(q_1,q_2)\in \gamma$ at which $\gamma$ is linear with exponent $m=(m_1,m_2)$, define the \emph{angular momentum} of $\gamma$ at $q$ to be $q_2m_1-q_1m_2$.

\begin{lemma}
The angular momentum is constant on $\gamma$.
\end{lemma}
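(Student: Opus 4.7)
The plan is to verify constancy separately on each domain of linearity and across each bend, since a broken line is piecewise linear with finitely many bends. The two checks are essentially one-line calculations, but they hinge on two geometric facts: on a linear piece the tangent vector is parallel to the exponent, and at a bend the bending point and the exponent change are both parallel to the direction vector of the wall.

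First I would fix a domain of linearity $\ell$ of $\gamma$ with exponent $m(\ell) = (m_1, m_2)$. By the definition of a broken line, $\gamma'(t) = -m(\ell)$ for $t \in \ell$, so the parametrization has the form $\gamma(t) = (q_1^0 - m_1(t-t_0),\, q_2^0 - m_2(t-t_0))$. Substituting into $q_2 m_1 - q_1 m_2$, the dependence on $t$ collapses as $-m_2 m_1 (t-t_0) + m_1 m_2 (t-t_0) = 0$. Thus the angular momentum is a well-defined function of $\ell$, constant throughout $\ell$.

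Next I would check that the value does not change at a bending point. Suppose $\gamma$ bends from $\ell$ to $\ell'$ when crossing a wall $(\fd, f_\fd)$ with $\fd \subseteq \RR w$, at a bending point $q$. By Definition \ref{brokendef}, $c(\ell')x^{m(\ell')}$ is a term in $c(\ell)x^{m(\ell)} f_\fd^{m(\ell)\cdot n}$. Since $f_\fd \in 1 + x^w\,\ZZ[\![x^w]\!]$, every monomial in $f_\fd^{m(\ell)\cdot n}$ has exponent in $\ZZ_{\geq 0}\, w$. Hence
\[
  m(\ell') = m(\ell) + k w \qquad \text{for some } k \in \ZZ_{\geq 0}.
\]
On the other hand $q \in \fd \subseteq \RR w$, so $q = \lambda w$ for some $\lambda \in \RR$. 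The difference in angular momentum across the bend is therefore
\[
  q_2\bigl(m_1(\ell') - m_1(\ell)\bigr) - q_1\bigl(m_2(\ell') - m_2(\ell)\bigr)
  = \lambda w_2 (k w_1) - \lambda w_1 (k w_2) = 0.
\]

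Combining these two observations and iterating over the finite sequence of bends of $\gamma$ completes the proof. There is no real obstacle here: the only subtle point is recognizing that the two facts needed, namely that $\gamma'(t)\parallel m(\ell)$ inside $\ell$ and that both the bending point $q$ and the jump $m(\ell')-m(\ell)$ lie in $\RR w$, are built into the definitions of broken line and wall, and that angular momentum is exactly the bilinear invariant killed by vectors parallel to $w$ when the position is also parallel to $w$.
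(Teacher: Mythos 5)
Your proof is correct and follows essentially the same route as the paper's: constancy on each linear domain because $\gamma'\parallel m(\ell)$, and invariance at each bend because both the bending point and the exponent jump $m(\ell')-m(\ell)=kw$ are parallel to the wall direction $w$. No gaps.
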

\begin{proof}
Let $q$ and $q'$ be two points on $\gamma$.
First, assume that $q=(q_1,q_2)$ and $q'=(q_1',q_2')$ are in the same linear region of $\gamma$, with exponent $m=(m_1,m_2)$.  Since $\gamma'=-m$ at $q$, there is some $t$ such that 
\[ (q_1',q_2') = (q_1+tm_1,q_2+tm_2) \]
Then the angular momentum at $q'$ is 
\[ (q_2+tm_2)m_1-(q_1+tm_1)m_2 = q_2m_1 - q_1m_2 \]

Next, assume that $q$ and $q'$ are points on $\gamma$ on either side of a bend at a wall $(\fd, f_{\fd})$ at point $q''=(q_1'',q_2'')$.  If the exponent of $\gamma$ at $q$ is $m=(m_1,m_2)$ and $f_{\fd}$ is a series in $x^{(w_1,w_2)}$, then the exponent of $\gamma$ at $q'$ must be of the form $(m_1+kw_1,m_2+kw_2)$ for some positive integer $k$.  By the argument of the previous paragraph, the angular momentum at $q$ is $q_2''m_1-q_1''m_2$
and the angular momentum at $q'$ is
\[ q_2''(m_1+kw_1)-q_1''(m_2+kw_2) = (q_2''m_1-q_1''m_2)+k(q_2''w_1-q_1''w_2) \]
Since the point $(q_1'',q_2'')$ lies on the ray through $(w_1,w_2)$, the expression $q_2''w_1-q_1''w_2$ is zero, and so the angular momenta at $q$ and $q'$ are the same.  This equality extends transitively to any pair of points $q,q'$ on $\gamma$.
\end{proof}
The sign of the angular momentum is a useful invariant for characterizing the qualitative behavior of a broken line.
For a broken line ending in the first quadrant, the sign of the angular momentum characterizes whether that broken line could have passed through the fourth quadrant (positive) or the second quadrant (negative).

\begin{lemma}
Let $\gamma$ be a broken line $\mathfrak{D}_{(b,c)}^{\mathbf{d}}$ with endpoint $q$ in the first quadrant. If $\gamma$ has positive (resp. negative) angular momentum, then the slope of the linear domains of $\gamma$ decreases (resp. increases) at each bend, except possibly at the boundary of the first quadrant. 
\end{lemma}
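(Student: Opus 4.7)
The plan is to reduce the lemma to one explicit slope-change identity and then case-split by wall type. Suppose $\gamma$ bends from a linear domain $\ell$ with exponent $m=(m_1,m_2)$ to $\ell'$ with exponent $m'=m+jw$ for some integer $j\ge 1$, at a point $q''=sw$ on a wall of direction $w=(w_1,w_2)$. A direct computation gives
\[
\frac{m_2+jw_2}{m_1+jw_1}-\frac{m_2}{m_1}=\frac{j(m_1w_2-m_2w_1)}{m_1(m_1+jw_1)},
\]
while the angular momentum at $q''=sw$ equals $L=s(m_1w_2-m_2w_1)$. Substituting, the slope change at the bend is $\dfrac{jL}{s\,m_1\,(m_1+jw_1)}$. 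Thus it suffices to show that $s\,m_1\,(m_1+jw_1)<0$ at every bend which does not lie on the boundary of the first quadrant.

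I would then split bends by wall type. The walls of $\fD_{(b,c)}^{\bfd}$ fall into three classes: walls supported in the open third quadrant (with $w$ in the open first quadrant and $s<0$), the $x$-axis wall $(\RR(1,0),1+x_1^b)$ with $w=(1,0)$, and the $y$-axis wall $(\RR(0,1),1+x_2^c)$ with $w=(0,1)$. The essential input in every case is the remark following Lemma \ref{lemma: change of direction}: a broken line ending in the first quadrant can move left only in the first and fourth quadrants (so $m_1\le 0$ throughout the second and third quadrants) and can move down only in the first and second quadrants (so $m_2\le 0$ throughout the third and fourth quadrants).

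For a third-quadrant wall, the bend point and a neighbourhood of it on $\gamma$ lie in the open third quadrant, so $m_1,\,m_1+jw_1\le 0$; because $w_1>0$ these cannot differ in sign between $\ell$ and $\ell'$, and the degenerate situation where one equals zero can be shown to contradict the sign hypothesis on $L$, so $m_1(m_1+jw_1)>0$. Combined with $s<0$ this is the required inequality. For the $x$-axis wall one has $w_2=0$, so $m_2$ is preserved and $L=-sm_2$. On the negative $x$-axis ($s<0$), the third-quadrant constraint forces $m_2\le 0$ and hence $L\le 0$, so no bends with $L>0$ occur there; for $L<0$ the rightward-motion constraint in the second and third quadrants gives $m_1,\,m_1+j\le 0$ and the desired inequality. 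The $y$-axis case is entirely symmetric, with the roles of the two signs of $L$ interchanged. Bends on the positive $x$- or positive $y$-axis lie on the boundary of the first quadrant and are precisely the excluded exceptions.

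The only delicate point will be handling degenerate configurations where $m_1$ or $m_1+jw_1$ vanishes, so that one of the two slopes is infinite. Each such configuration can either be ruled out using the quadrant-movement restrictions together with the sign of $L$, or absorbed by interpreting ``slope'' as the angle of the direction of motion; either way the monotonicity statement retains meaning. I expect this sign bookkeeping, rather than the one-line computation or the quadrant arguments themselves, to be the chief source of care in writing out the proof.
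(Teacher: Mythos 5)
Your argument is correct and is essentially the paper's own proof: both rest on the identity expressing the slope change as $j(m_1w_2-m_2w_1)/\bigl(m_1(m_1+jw_1)\bigr)$, the observation that a non-boundary bend point is a negative multiple $sw$ of the wall direction (so the sign of the cross product is opposite to that of the angular momentum), and the quadrant-movement constraints forcing $m_1$ and $m_1+jw_1$ to be strictly negative. Your case split by wall type and your explicit flagging of the degenerate (infinite-slope) configurations are just a more granular packaging of the paper's uniform treatment.
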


\tikzstyle{wall}=[draw=black!25,thick]
\tikzstyle{dot} = [blue,fill=blue,inner sep=0.25mm,circle,draw,minimum size=.5mm]

\noindent Figure \ref{fig: brokenlineexample} depicts a broken line with positive angular momentum.  The slopes of the linear domains decrease from $\frac{5}{4}$ to $1$ to $\frac{1}{2}$ before increasing to $+\infty$.

\begin{figure}[h!t]
\begin{tikzpicture}
    \begin{scope}[scale=.75]
      \clip (-5.5,-5.5) rectangle (5.5,5.5);
        \draw[wall] (0,0) to (0,10);
        \draw[wall] (0,0) to (-10,0);
        
        \draw[wall] (0,0) to (-12,-6);
        \draw[wall] (0,0) to (-12,-8);
        \draw[wall] (0,0) to (-12,-9);
        \draw[wall] (0,0) to (-12,-9.6);
        \draw[wall] (0,0) to (-12,-10);
        \draw[wall] (0,0) to (-12,-10.28);
        \draw[wall] (0,0) to (-6,-12);
        \draw[wall] (0,0) to (-9,-12);
        \draw[wall] (0,0) to (-9.6,-12);
        \draw[wall] (0,0) to (-10,-12);
        \draw[wall] (0,0) to (-10.28,-12);        
        \path[fill=black!25] (0,0) to (-10.5,-12) to (-12,-10.5) to (0,0);
        \draw[wall, draw=red!25] (0,0) to (-10,-10);
        \draw[wall,black] (0,0) to (10,0);
        \draw[wall,black] (0,0) to (0,-10);
        \draw[wall,black] (0,0) to (-8,-12);
        
        \draw[blue,thick] (2,1.73) to node[right] {$m=(0,-1)$} (2,0) to node[below right] {$m=(-2,-1)$} (0,-1) to node[below right] {$m=(-2,-2)$} (-2,-3) to node[right] {$m=(-4,-5)$} (-6,-7);
        \node[dot] at (2,0) {};
        \node[dot] at (0,-1) {};
        \node[dot] at (-2,-3) {};
        \node[dot,draw=black, fill=black] (q) at (2,1.73) {};
        \node[above right] at (q) {$q$};
        
        \node[below left,black!25] at (5.5,5.5) {Quadrant I};
        \node[below right,black!25] at (-5.5,5.5) {Quadrant II};
        \node[above left,black!25] at (5.5,-5.5) {Quadrant IV};
        
    \end{scope}
\end{tikzpicture}
\caption{A broken line with positive angular momentum}
\label{fig: brokenlineexample}
\end{figure}

\begin{proof}
The lemma is straightforward except for broken lines with initial exponent $(m_1,m_2)$ with $m_1,m_2<0$.
Consider a bend of $\gamma$ at a point $(q_1,q_2)$ in a wall $(\fd,f_{\fd}(x^{(w_1,w_2)}))$.  If the exponent immediately before the bend is $(m_1,m_2)$, the exponent immediately after the bend is $(m_1+kw_1,m_2+kw_2)$ for some positive integer $k$.  

Assume that $(q_1,q_2)$ is not in the boundary of the first quadrant, so that
$(q_1,q_2)$ is a negative scalar multiple of the exponent $(w_1,w_2)$. By this
assumption, in view of Lemma \ref{lemma: change of direction} and the fact that
$q$ lies in the first quadrant, we have also
$m_1+kw_1,m_2+kw_2 <0$.

If the angular momentum $q_2m_1-q_1m_2$ is positive, then the cross-product $w_2m_1-w_1m_2$ is negative.  But for positive $k$,
\[ k(w_2m_1-w_1m_2) =m_1(m_2+kw_2)-m_2(m_1+kw_1)<0 \Rightarrow \frac{m_2+kw_2}{m_1+kw_1} < \frac{m_2}{m_1}\]
as desired.  If the angular momentum is negative, the slope increases by an identical argument.
\end{proof}

We can now constrain the possible final exponent of a broken line, which will be used to bound the support of the corresponding theta function.

\begin{lemma}
Let $\gamma$ be a broken line in $\mathfrak{D}_{(b,c)}^{\mathbf{d}}$ which begins in the third quadrant, with endpoint $q$ in the first quadrant.  Denote the  initial exponent by $m=(m_1,m_2)$ and the final exponent by $m^q=(m_1^q,m_2^q)$.  
\begin{enumerate}
  \item If $\gamma$ has positive angular momentum, then $m_2\leq m_2^q<0$ and
  \[ m_1\leq m_1^q\leq\left( \frac{m_1}{m_2}-b\right)m_2^q\]
  where the upper bound is equality only when $m^q =( m_1-bm_2,m_2)$.
  \item If $\gamma$ has negative angular momentum, then $m_1\leq  m_1^q<0$ and
  \[ m_2 \leq m_2^q\leq\left( \frac{m_2}{m_1}-c\right)m_1^q\]
  where the upper bound is equality only when $m^q =( m_1,m_2-cm_1)$.
\end{enumerate}
\end{lemma}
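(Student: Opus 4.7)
The plan is to prove each of the four inequalities separately, using the invariance of the angular momentum along $\gamma$ and the explicit structure of the walls of $\fD^{\mathbf{d}}_{(b,c)}$ (all of whose supports lie on the coordinate axes or in the third quadrant, with every wall's $w$-vector contained in $\sigma^{\mathbf{d}}$, i.e.\ the first quadrant).

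The lower bounds $m_1 \leq m_1^q$ and $m_2 \leq m_2^q$ are immediate from iterating \cref{lemma: change of direction} across all bends of $\gamma$. For the strict-negativity condition $m_2^q < 0$ in part~(1), I would analyze the last linear piece of $\gamma$, which extends from the last bend point $P_{\text{last}}$ (on some wall) to $q$ in the first quadrant. Evaluating the constant positive angular momentum at $P_{\text{last}}$, one sees by direct sign inspection that $P_{\text{last}}$ cannot lie on the positive $y$-axis or on the negative $x$-axis; in each of the remaining three cases (the positive $x$-axis, a third-quadrant ray, or the negative $y$-axis), the sign of the second coordinate of $q - P_{\text{last}}$, which is a positive multiple of $-m^q$, forces $m_2^q < 0$.

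The main obstacle is the upper bound $m_1^q \leq (m_1/m_2 - b) m_2^q$. Using $m_2, m_2^q < 0$, this is equivalent to showing $m \times m^q \leq b\, m_2\, m_2^q$, where $m \times m^q := m_1 m_2^q - m_2 m_1^q$ denotes the two-dimensional determinant. My idea is to track the quantity $I(m^{(\cdot)}) := b\, m_2\, m_2^{(\cdot)} - m \times m^{(\cdot)}$ at each exponent $m^{(\cdot)}$ reached along $\gamma$; initially $I(m) = b\, m_2^2 > 0$, and the desired bound is precisely $I(m^q) \geq 0$. At a bend along $w_j$ with multiplicity $k_j$, the change is $\Delta I = k_j \bigl[b\, m_2\, w_j^2 - (m \times w_j)\bigr]$; at a positive $x$-axis bend (where $w = (b,0)$) this specializes to $k_j b\, m_2$, which is negative and bounded in magnitude by $-b\, m_2\, m_2^{(\text{current})}$ via the bending-multiplicity constraint $k_j \leq -m_2^{(\text{current})}$. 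The hard part will be combining this with the slope-monotonicity implied by positive angular momentum to keep $I$ non-negative across all bend sequences---particularly those interleaving multiple axis bends or passing through an intermediate stage where $m_1^{(\text{current})}$ has turned non-negative; for bends at the $y$-axis or a third-quadrant wall, the positive-angular-momentum constraint evaluated at the bend point controls the sign of $\Delta I$ in an analogous way. Equality $I(m^q) = 0$ forces a single $x$-axis bend of maximum multiplicity $k = -m_2$ with no other bends, producing $m^q = (m_1 - b\, m_2,\, m_2)$.

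Part~(2), concerning negative angular momentum, follows by an entirely symmetric argument with the roles of the two coordinates and of $b$ and $c$ interchanged; the extremal configuration there yields $m^q = (m_1,\, m_2 - c\, m_1)$.
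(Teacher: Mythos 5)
Your lower bounds and the argument for $m_2^q<0$ are fine, but the heart of the lemma --- the upper bound $m_1^q\leq(m_1/m_2-b)m_2^q$ and its equality case --- is not actually proved. You set up the quantity $I(m^{(\cdot)})=b\,m_2\,m_2^{(\cdot)}-m\times m^{(\cdot)}$ and compute $\Delta I$ at a positive $x$-axis bend, but for every other wall you only assert that ``the positive-angular-momentum constraint evaluated at the bend point controls the sign of $\Delta I$ in an analogous way.'' This is exactly the hard part, and it is not automatic: for a third-quadrant wall with direction $w=(w_1,w_2)$ one has $\Delta I=k\bigl[b\,m_2w_2-(m_1w_2-m_2w_1)\bigr]$, and with $m_1,m_2<0$ and $w_1,w_2\geq 0$ the bracket contains terms of both signs, so nothing you have written pins down its sign; the same is true for a bend at the negative $y$-axis (which a positive-angular-momentum broken line can in fact hit). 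Since the equality analysis also rests on knowing that every non-$x$-axis bend contributes strictly negatively (or not at all) to $I$, the characterization of when $m^q=(m_1-bm_2,m_2)$ is likewise unsupported. As it stands the proposal is a plan with the decisive step flagged but left open.

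The paper's route avoids this bookkeeping entirely by exploiting the geometry of $\fD^{\bfd}_{(b,c)}$: all walls other than the two axes lie in the third quadrant, so the interior of the fourth quadrant contains no walls at all. A broken line with positive angular momentum must enter the first quadrant across the positive $x$-axis, hence carries a single fixed exponent $(m_1',m_2')$ throughout the fourth quadrant, and by the slope-monotonicity lemma $m_2'/m_1'\leq m_2/m_1$ with equality only if there were no earlier bends. The only remaining bend is the one at $\bigl(\RR(1,0),1+x_1^b\bigr)$, where the final exponent is $(m_1'+kb,m_2')$ with $0\leq k\leq -m_2'$, and the upper bound plus its equality case drop out of a one-line computation. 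If you want to salvage your invariant-tracking approach, you would still need essentially this structural input (positive angular momentum confines all bends after the slope is determined to the single $x$-axis crossing), at which point the invariant is no longer doing any work; I recommend adopting the localization argument instead.
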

\begin{proof}
Assume $\gamma$ has positive angular momentum; consequently, $\gamma$ passes through the fourth quadrant before entering the first quadrant.  Let $(m_1',m_2')$ be the exponent on $\gamma$ in the fourth quadrant.  By the preceding lemma, $\frac{m_2'}{m_1'}\leq \frac{m_2}{m_1}$ with equality only if $\gamma$ doesn't bend before it reaches the fourth quadrant.  

As the broken line passes into the first quadrant, it may bend at the wall $(\mathbb{R}(1,0),1+x^{(b,0)})$.  By definition, the final exponent $m^q$ on $\gamma$ must be an exponent that appears in $x^{(m_1',m_2')}(1+x^{(b,0)})^{-m_2'}$.  It follows that 
\[ (m_1^q,m_2^q) = (m_1'+kb,m_2')\]
for some $0\leq k\leq -m_2'$.
Consequently, $m_2^q=m_2'<0$ and 
\[ m_1^q\leq m_1' - bm_2' = \left(\frac{m_1'}{m_2'} - b\right) m_2'  = \left(\frac{m_1'}{m_2'} - b\right) m_2^q \leq \left(\frac{m_1}{m_2} - b\right) m_2^q\]
The second inequality is equality only if $(m_1',m_2')=(m_1,m_2)$, and so the composite inequality is equality only if $m^q =( m_1-bm_2,m_2)$.

Analogous inequalities hold for negative angular momentum by the same argument.
\end{proof}

\begin{proof}[Proof of Theorem \ref{theorem: main}]

If $m=(m_1,m_2)$ such that $m_1\geq0$ or $m_2\geq0$, then $\vartheta_{q,m}^\mathbf{d}$ is the cluster monomial $x[-m_1,-m_2]$, by Remark \ref{rk:theta functions g-vector}.
Next, assume that $m=(m_1,m_2)$ such that $m_1\leq0$ and $m_2\leq0$.
The coefficient of $x^{(a_1,a_2)}$ in $\vartheta_{q,m}^\mathbf{d}$ can have non-zero coefficient only if there is a broken line $\gamma$ in $\mathfrak{D}^\mathbf{d}_{(b,c)}$ with initial exponent $m$ and final exponent $a=(a_1,a_2)$.  By the preceding lemma, this implies that 
\[ m_1\leq a_1\leq \left(\frac{m_1}{m_2}-b\right)a_2,\;\;\; m_2\leq a_2\leq \left(\frac{m_2}{m_1}-c\right)a_1\]
Furthermore, the upper bounds are only satisfied in the specific cases when $(a_1,a_2)$ is equal to $(m_1-bm_2,m_2)$ or $(m_1,m_2-cm_1)$.  
Since $\vartheta_{q,m}^\mathbf{d}\in \mathcal{A}_{b,c}$, Scholium \ref{sch} implies that $\vartheta_{q,m}^\mathbf{d}$ is a scalar multiple of $x[-m_1,-m_2]$. 

To show they coincide, we consider the coefficient of $x^{(m_1,m_2)}$ in each element.  The coefficient of $x^{(m_1,m_2)}$ in $x[-m_1,-m_2]$ is $1$, by the definition of a pointed element. The coefficient of $x^{(m_1,m_2)}$ in $\vartheta_{q,m}^\mathbf{d}$ is the sum of the coefficients of all broken lines in $\mathfrak{D}_{(b,c)}^\mathbf{d}$ with initial exponent $(m_1,m_2)$ and final exponent $(m_1,m_2)$.  Since any bend in a broken line would increase one of the components of the exponent, this only happens for the unique broken line with initial exponent $(m_1,m_2)$ that has no bends. Hence, the coefficient of $x^{(m_1,m_2)}$ in $\vartheta_{q,m}^\mathbf{d}$ is $1$, and so $\vartheta_{q,m}^\mathbf{d}=x[-m_1,-m_2]$.
\end{proof}

\begin{remark}
As mentioned in Remark \ref{remark:Dyck}, the coefficients of $x[-m_1,-m_2]$ may be interpreted as counting `compatible pairs' in a lattice path called a `maximal Dyck path'.  One consequence of Theorem \ref{theorem: main} is that the coefficients $c(p,q)$ are equal to a weighted sum of certain broken lines.  An interesting open problem is to reprove the coincidence of the two bases by giving a combinatorial bijection between broken lines and compatible pairs which directly proves the equality of the respective coefficients.
\end{remark}

\end{document}